\newcommand{\pp}[2]{\def\firstarg{#1}\def\secondarg{#2}\def\empty{}%
\sout{#1}%
\ifx\firstarg\empty\else\ifx\secondarg\empty\else{ }\fi\fi%
\uwave{#2}}
\newtheorem{observation}{Observation}
\newtheorem{theorem}{Theorem}     
\newtheorem{corollary}{Corollary}
\newtheorem{lemma}{Lemma}
\newtheorem{proposition}{Proposition}
\newtheorem{remark}{Remark}
\newtheorem{definition}{Definition}
\newenvironment{proof}{\noindent {\bf Proof.\,}}{\qed}
\def\CL{C\!L}
\def\CLG{\textsc{color-line graph}}
\def\PCLG{\textsc{proper color-line graph}}
\begin{document}

\begin{frontmatter}

\title{Color-line and Proper Color-line Graphs
}

\author{Van Bang Le}
\ead{van-bang.le@uni-rostock.de}
\address{Universit\"at Rostock, Institut f\"ur Informatik, Rostock, Germany}

\author{Florian Pfender}
\ead{florian.pfender@ucdenver.edu}
\address{University of Colorado Denver, Department of Mathematics \& Statistics,\\ Denver, CO 80202, USA}

\begin{abstract}
Motivated by investigations of rainbow matchings in edge colored graphs, we introduce the notion of color-line graphs that generalizes the classical concept of line graphs in a natural way. Let $H$ be a (properly) edge colored graph. The ({\it proper}) \emph{color-line graph} $\CL(H)$ of $H$ has edges of $H$ as vertices, and two edges of $H$ are adjacent in $\CL(H)$ if they have an endvertex in common or have the same color. 

We give Krausz-type characterizations for (proper) color-line graphs, and show that, for any fixed $k$, recognizing color-line graphs of properly edge colored graphs $H$ with at most $k$ colors is polynomially solvable. Moreover, we give a good characterization for proper $2$-color-line graphs that yields a linear time recognition algorithm in this case.

In contrast, we point out that, for any fixed $k\ge 2$, recognizing if a graph is the color-line graph of some graph $H$ in which the edges are colored with at most $k$ colors is NP-complete.
\end{abstract}

\begin{keyword}
Line graph \sep color-line graph \sep graph class

\MSC[2010]
05C75 \sep 
05C76 \sep 
05C85 \sep 
68R05 \sep 
68R10 
\end{keyword}

\end{frontmatter}

\section{Introduction}

All graphs considered in this paper are simple and have no loops.
The \emph{line graph} $L(H)$ of a graph $H$ has vertex set $E(H)$. Two vertices in $L(H)$ are adjacent if the corresponding edges in $H$ are adjacent in $H$. A graph $G$ is called a line graph if there exists a graph $H$ 
such that $G$ is isomorphic to $L(H)$, we call the graph $H$ a root graph of $G$. The notion of line graphs was first introduced by Whitney~\cite{Whitney} in 1932. 
Line graphs form a very basic graph class in graph theory (especially, one of the five basic classes of perfect graphs is the class of line graphs of bipartite graphs; cf.~\cite{CRST}) and are widely investigated. See~\cite{CaiCP,HeBe,Prisner} for more information on line graphs, their generalizations and related concepts.

Among others, the following three characterizations of line graphs are well-known. (An \emph{odd triangle} $T$ in 
a graph is a $K_3$ such that some vertex has an odd number of neighbors in $T$). 

\begin{theorem}
\label{thm:line1}
The following statements are equivalent for every graph $G$:
\begin{description}
 \item[(A)] $G$ is a line graph.
 \item[(B) (Krausz~\cite{Krau})] There exists a family of cliques $\mathcal{Q}$ of $G$ such that every 
vertex of $G$ belongs to at most two members in $\mathcal{Q}$ and every edge of $G$ belongs to exactly one member 
in $\mathcal{Q}$.
 \item[(C) (van Rooij \& Wilf~\cite{RooWil})] $G$ is claw-free and every two odd triangles having a common edge belong to a $K_4$. 
 \item[(D) (Beineke~\cite{Beineke})] $G$ does not contain any of the nine graphs depicted in Figure~\ref{fig:line-forbidden} as an induced subgraph.
 \end{description}
\end{theorem}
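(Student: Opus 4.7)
The plan is to establish the four characterizations via the chain (A) $\Leftrightarrow$ (B) $\Leftrightarrow$ (C) together with the direct equivalence (A) $\Leftrightarrow$ (D).

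For (A) $\Rightarrow$ (B), given $G = L(H)$, I take $\mathcal{Q} = \{Q_v : v \in V(H),\ \deg_H(v) \ge 1\}$, where $Q_v$ is the set of edges of $H$ incident to $v$. Each $Q_v$ is a clique in $L(H)$, each vertex of $L(H)$ lies in at most two members of $\mathcal{Q}$ (one per endpoint of the corresponding $H$-edge), and each edge of $L(H)$ lies in exactly one. For (B) $\Rightarrow$ (A), the construction reverses: form $H$ by taking one vertex per member of $\mathcal{Q}$, adding a pendant vertex for every $G$-vertex that lies in only one clique, and turning each $G$-vertex into the $H$-edge joining its (at most two) containing vertices. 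The defining conditions of a Krausz family translate directly to $L(H) \cong G$.

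For (A) $\Rightarrow$ (C), $L(H)$ is claw-free because any three edges of $H$ all incident to a common edge $e$ would, by pigeonhole on the two endpoints of $e$, contain two that meet at a vertex of $H$ and are therefore adjacent in $L(H)$. A triangle in $L(H)$ arises either from a $K_3$ in $H$ or from three edges meeting at a common vertex of $H$, and a short case analysis shows that the odd-triangle condition on a shared edge forces both triangles to be of the second type at the same vertex, producing a $K_4$ in $L(H)$. For (C) $\Rightarrow$ (B), I build $\mathcal{Q}$ from the maximal cliques of $G$, splitting only those $K_4$s arising from triangles of the root graph; claw-freeness guarantees that each vertex lies in at most two resulting cliques, and the odd-triangle condition is precisely what distinguishes $K_4$s that must be split from those that remain whole.

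Finally, for (A) $\Leftrightarrow$ (D), the forward direction is a finite case check: for each of the nine Beineke graphs, one verifies by examining local degree patterns (or equivalently by failing to find a Krausz family) that it is not a line graph. The converse is the technically demanding part and is the heart of Beineke's theorem. The plan is to show, by induction and local analysis, that in any graph avoiding the nine subgraphs every closed neighborhood is the union of at most two cliques, and then to glue these local partitions into a single global family $\mathcal{Q}$ satisfying (B); the nine forbidden subgraphs arise precisely as the obstructions to either building or to consistently gluing these local partitions. I expect this last gluing step to be the main obstacle, as proving completeness of the forbidden list requires a delicate structural argument tracing each failure mode back to one of the nine configurations.
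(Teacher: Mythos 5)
The first thing to say is that the paper does not prove Theorem~\ref{thm:line1} at all: it is stated as a classical result with attributions to Krausz, van Rooij--Wilf and Beineke, so there is no in-paper argument to compare your proposal against. Judged on its own, your outline follows the standard classical route, and the elementary implications are correctly sketched: (A) $\Leftrightarrow$ (B) via the star cliques $Q_v$ and the reverse Krausz construction; the pigeonhole argument for claw-freeness; and the observation that a triangle of $L(H)$ coming from a triangle of $H$ is even (every other edge of $H$ meets $0$ or $2$ of its edges), so odd triangles must be stars, and two odd triangles on a common edge are stars centred at the common endvertex of the two shared root edges, yielding a $K_4$. These parts are sound.

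The genuine gap is that the two deep implications, (C) $\Rightarrow$ (B) and (D) $\Rightarrow$ (A), are only plans. Your description of (C) $\Rightarrow$ (B) refers to ``$K_4$s arising from triangles of the root graph'' at a point in the argument where no root graph exists yet; the construction has to be intrinsic to $G$, classifying each triangle of $G$ as odd or even, showing (using claw-freeness and the odd-triangle hypothesis) that every maximal clique on at least four vertices must be taken whole as a member of $\mathcal{Q}$, and then assigning the remaining triangles and edges so that no vertex ends up in three members --- this consistency check is exactly where the work lies, and it is not supplied. Likewise, for (D) $\Rightarrow$ (A) you state the intended strategy (closed neighborhoods are unions of two cliques, then glue) and explicitly flag the gluing step as unresolved; that step is the substance of Beineke's theorem. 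So the proposal is a correct and standard strategy, but as submitted it proves only the easy directions; since the paper itself simply cites these results rather than reproving them, the cleanest fix is either to do the same or to carry out the van Rooij--Wilf and Beineke arguments in full.
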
 

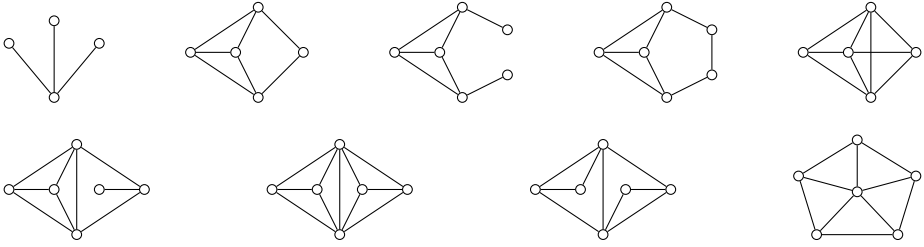
\begin{figure}[thb]
\begin{center}
\begin{tikzpicture}[scale=.6]
\tikzstyle{vertex}=[draw,circle,inner sep=1.3pt] 
\node[vertex] (1) at (0,1.2)   {};
\node[vertex] (2) at (1,1.7)   {};
\node[vertex] (3) at (2,1.2) {};
\node[vertex] (4) at (1,0)   {};

\draw (1) -- (4) -- (2);
\draw (3) -- (4);
\end{tikzpicture}
\hfill
\begin{tikzpicture}[scale=.6]
\tikzstyle{vertex}=[draw,circle,inner sep=1.3pt] 
\node[vertex] (1) at (0,1)   {};
\node[vertex] (2) at (1,1)   {};
\node[vertex] (3) at (1.5,0) {};
\node[vertex] (5) at (2.5,1)   {};
\node[vertex] (6) at (1.5,2) {};

\draw (1) -- (3) -- (5);
\draw (2) -- (3); 

\draw (1) -- (6) -- (5);
\draw (2) -- (6); 
\draw (1) -- (2);
\end{tikzpicture}
\hfill
\begin{tikzpicture}[scale=.6]
\tikzstyle{vertex}=[draw,circle,inner sep=1.3pt] 
\node[vertex] (1) at (0,1)   {};
\node[vertex] (2) at (1,1)   {};
\node[vertex] (3) at (1.5,0) {};
\node[vertex] (a) at (2.5,0.5) {};
\node[vertex] (b) at (2.5,1.5) {};
\node[vertex] (6) at (1.5,2) {};

\draw (1) -- (3) -- (a);
\draw (2) -- (3); 

\draw (1) -- (6) -- (b);
\draw (2) -- (6); 
\draw (1) -- (2);
\end{tikzpicture}
\hfill
\begin{tikzpicture}[scale=.6]
\tikzstyle{vertex}=[draw,circle,inner sep=1.3pt] 
\node[vertex] (1) at (0,1)   {};
\node[vertex] (2) at (1,1)   {};
\node[vertex] (3) at (1.5,0) {};
\node[vertex] (a) at (2.5,0.5) {};
\node[vertex] (b) at (2.5,1.5) {};
\node[vertex] (6) at (1.5,2) {};

\draw (1) -- (3) -- (a);
\draw (2) -- (3); 

\draw (1) -- (6) -- (b);
\draw (2) -- (6); 
\draw (1) -- (2);
\draw (a) -- (b);
\end{tikzpicture}
\hfill
\begin{tikzpicture}[scale=.6]
\tikzstyle{vertex}=[draw,circle,inner sep=1.3pt] 
\node[vertex] (1) at (0,1)   {};
\node[vertex] (2) at (1,1)   {};
\node[vertex] (3) at (1.5,0) {};
\node[vertex] (5) at (2.5,1)   {};
\node[vertex] (6) at (1.5,2) {};

\draw (1) -- (3) -- (5) -- (2) -- (3);

\draw (1) -- (6) -- (5);
\draw (1) -- (2) -- (6);
\draw (3) -- (6); 
\end{tikzpicture}
\end{center}

\begin{center}
\begin{tikzpicture}[scale=.6]
\tikzstyle{vertex}=[draw,circle,inner sep=1.3pt] 
\node[vertex] (1) at (0,1)   {};
\node[vertex] (2) at (1,1)   {};
\node[vertex] (3) at (1.5,0) {};
\node[vertex] (4) at (2,1)   {};
\node[vertex] (5) at (3,1)   {};
\node[vertex] (6) at (1.5,2) {};

\draw (1) -- (3) -- (5);
\draw (2) -- (3);

\draw (1) -- (6) -- (5);
\draw (2) -- (6);
\draw (1) -- (2);
\draw (4) -- (5);
\draw (3) -- (6); 
\end{tikzpicture}
\hfill
\begin{tikzpicture}[scale=.6]
\tikzstyle{vertex}=[draw,circle,inner sep=1.3pt] 
\node[vertex] (1) at (0,1)   {};
\node[vertex] (2) at (1,1)   {};
\node[vertex] (3) at (1.5,0) {};
\node[vertex] (4) at (2,1)   {};
\node[vertex] (5) at (3,1)   {};
\node[vertex] (6) at (1.5,2) {};

\draw (1) -- (3) -- (5);
\draw (2) -- (3) -- (4);

\draw (1) -- (6) -- (5);
\draw (2) -- (6) -- (4);
\draw (1) -- (2);
\draw (4) -- (5);
\draw (3) -- (6); 
\end{tikzpicture}
\hfill
\begin{tikzpicture}[scale=.6]
\tikzstyle{vertex}=[draw,circle,inner sep=1.3pt] 
\node[vertex] (1) at (0,1)   {};
\node[vertex] (2) at (1,1)   {};
\node[vertex] (3) at (1.5,0) {};
\node[vertex] (4) at (2,1)   {};
\node[vertex] (5) at (3,1)   {};
\node[vertex] (6) at (1.5,2) {};

\draw (1) -- (3) -- (5);
\draw (3) -- (4);

\draw (1) -- (6) -- (5);
\draw (2) -- (6);
\draw (1) -- (2);
\draw (4) -- (5);
\draw (3) -- (6);
\end{tikzpicture}
\hfill
\begin{tikzpicture}[scale=.6]
\tikzstyle{vertex}=[draw,circle,inner sep=1.3pt] 
\node[vertex] (1) at (0.1,-0.1)  {};
\node[vertex] (2) at (-0.3,1.2)  {};
\node[vertex] (3) at (1,2)       {};
\node[vertex] (4) at (2.3,1.2)   {};
\node[vertex] (5) at (1.9,-0.1)  {};
\node[vertex] (6) at (1,.85)       {};

\draw (1) -- (2) -- (3) -- (4) -- (5) -- (1) -- (6) -- (2);
\draw (3) -- (6) -- (4);
\draw (5) -- (6);
\end{tikzpicture}
\end{center}
\caption{Forbidden induced subgraphs for line graphs.}
\label{fig:line-forbidden}
\end{figure}

Each of the characterizations (C) and (D) in Theorem~\ref{thm:line1} implies that line graphs can be recognized in polynomial time. 
In fact, the following stronger result is also well-known.

\begin{theorem}[Roussopoulos~\cite{Rou}, Lehot~\cite{Lehot}]\label{thm:line2}
Line graphs can be recognized in linear time. Moreover, if $G$ is a line graph, a root graph $H$ with $G=L(H)$ can be found in linear time, too.
\end{theorem}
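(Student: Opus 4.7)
The plan is to exploit Krausz's characterization (B) from Theorem~\ref{thm:line1}: a graph $G$ is a line graph if and only if it admits a family $\mathcal{Q}$ of cliques (a \emph{Krausz partition}) covering every edge exactly once and every vertex at most twice. Given such a $\mathcal{Q}$, the root graph $H$ is recovered in linear time by taking one vertex of $H$ per member of $\mathcal{Q}$ (plus one auxiliary vertex for every $v\in V(G)$ that lies in only one clique of $\mathcal{Q}$) and turning every $v\in V(G)$ into the edge of $H$ joining its (at most two) host cliques. The whole problem therefore reduces to finding $\mathcal{Q}$ in linear time.

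The key observation is that in $L(H)$, the neighborhood $N_G(v)$ of a vertex $v\in V(G)$ corresponding to an edge $uw\in E(H)$ decomposes into two cliques: the edges of $H$ incident to $u$ (other than $v$) and the edges of $H$ incident to $w$ (other than $v$). Thus the algorithm attempts, for each vertex $v$, to decompose $N_G(v)$ into at most two cliques. The decomposition is forced by the non-edges inside $N_G(v)$: whenever $a,b\in N_G(v)$ with $ab\notin E(G)$, the pair $\{a,b\}$ must be split between the two parts, so a BFS-style propagation on the complement of $G[N_G(v)]$ yields the unique candidate partition (if any). The local partitions are then stitched into a global family $\mathcal{Q}$ by identifying, for each edge $uv\in E(G)$, the class at $u$ containing $v$ with the class at $v$ containing $u$, using a union-find style data structure. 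Finally one checks in linear time that every edge of $G$ lies in exactly one resulting class and every vertex in at most two; if either check fails, $G$ is not a line graph.

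The main obstacle is the linear-time budget. Computing the two-clique decomposition of $N_G(v)$ naively costs $O(\deg(v)^2)$, which globally could be $\Omega(|V(G)|\cdot |E(G)|)$, and naive clique-merging across vertices can also blow up. The remedy, following Lehot~\cite{Lehot}, is an inductive peeling: at each step, pick a vertex $v$ of minimum degree in the remaining graph, commit to its local partition (which in a line graph is essentially unique once $v$ has small degree), then remove $v$ and recurse on $G-v$, which is again a line graph. A careful charging argument shows that the work spent at $v$ can be amortized against the edges incident to $v$ and its lowest-degree neighbour, so the total cost telescopes to $O(|V(G)|+|E(G)|)$. A few small exceptional cases --- notably $K_3$, which is both $L(K_3)$ and $L(K_{1,3})$, and other tiny graphs where the Krausz partition is not unique --- must be handled separately, but contribute only constant overhead.
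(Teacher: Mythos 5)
The paper offers no proof of this theorem: it is imported verbatim from Roussopoulos and Lehot, so there is no in-paper argument to compare yours against. Judged on its own, your sketch follows the standard route (recover a Krausz partition by decomposing each neighborhood $N_G(v)$ into two cliques, stitch the local classes together, then read off $H$), and the reduction from a Krausz partition to the root graph $H$ is correct and genuinely linear. But the two places where the actual content of the theorem lives are both asserted rather than proved. First, the local decomposition is \emph{not} ``forced by the non-edges'': BFS on the complement of $G[N_G(v)]$ pins down the bipartition only within each connected component of that complement, and whenever the complement is disconnected --- in particular whenever $N_G(v)$ is a clique, which happens at every $v=uw$ with $\deg_H(u)=1$ and is in no way confined to tiny exceptional graphs --- the assignment of components to the two sides is ambiguous. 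Inconsistent local choices can then fail to stitch into a global Krausz family even when $G$ is a line graph. The classical algorithms resolve exactly this ambiguity via the odd-triangle machinery behind Theorem~\ref{thm:line1}~(C), deciding which triangles of $G$ are permitted to be cliques of the partition; your proposal supplies no such rule.

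Second, the linear-time bound, which is the entire assertion of the theorem, is not established. Building and searching the complement of $G[N_G(v)]$ costs $\Theta(\deg(v)^2)$, so the naive total is $\sum_v \deg(v)^2$, which is not $O(n+m)$. You defer to ``a careful charging argument'' in the peeling step, but that argument is precisely the technical heart of Lehot's and Roussopoulos's papers: they avoid ever examining all of $G[N_G(v)]$ by reconstructing the two cliques at $v$ from a constant number of well-chosen neighbors and their common neighborhoods. Without spelling out that mechanism (or the amortization that replaces it), what you have is a correct polynomial-time recognition outline, not a proof of linear-time recognition. So the proposal is the right strategy in outline, but it has genuine gaps at both the correctness of the local step and the claimed running time.
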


Line graphs are also interesting and useful as they relate questions about edges with questions about vertices. By definition, for instance, a matching in $H$ corresponds to an independent vertex set (of the same size) in $L(H)$, and vice versa. 

Given a graph $H=(V(H),E(H))$, an edge coloring is a function 
$\phi : E(H)\rightarrow \mathcal{C}$ mapping each edge 
$e\in E(H)$ to a color $\phi(e)\in \mathcal{C}$; $\phi$ is a \emph{proper} edge coloring 
if, for all distinct edges $e$ and $e'$, $\phi(e)\not=\phi(e')$ whenever $e$ and $e'$ have 
an endvertex in common. A (properly) edge colored graph $(H,\phi)$ is a pair of a graph 
together with a (proper) edge coloring. A {\em rainbow subgraph\/} of an edge colored graph 
is a subgraph whose edges have distinct colors. Rainbow subgraphs appear frequently in the 
literature, for a recent survey we point to~\cite{KL}. 

The following notion is motivated by investigations on rainbow matchings (see~\cite{LP}). 

\begin{definition} Let $H$ be a (not necessarily properly) edge colored graph. The {\em color-line graph\/} $\CL(H)$ of $H$ has the edge set $E(H)$ as its vertex set and two vertices in $\CL(H)$ are adjacent if the corresponding edges in $H$ are adjacent or have the same color. 
A graph $G$ is a {\em color-line graph\/} if there exists an edge colored graph $H$ such that $G$ is isomorphic to $\CL(H)$. 
Further, a graph $G$ is a {\em proper color-line graph\/} if there exists a properly edge colored graph $H$ such that $G$ is isomorphic to $\CL(H)$. 
\end{definition}

See Figure~\ref{fig:color-line} for an example. 
By definition, rainbow matchings in $H$ correspond to independent vertex sets in $\CL(H)$, and vice versa. 

In this paper, we study color-line and proper color-line graphs in their own right, primarily from the point of view of recognizing if a given graph is a color-line graph, respectively, a proper color-line graph. 
In Section~\ref{sec:color-line} we discuss properties of color-line graphs and proper color-line graphs. In particular we give Krausz-type characterizations for color-line and proper color-line graphs. In Sections~\ref{sec:k-color-line} and~\ref{sec:proper-k-color-line} we discuss the recognition problem for $k$-color-line graphs and proper $k$-color-line graphs. 
These are color-line graphs of edge colored graphs with at most $k$ colors. 
We show that, for any fixed $k\ge 2$, recognizing if a graph is a $k$-color-line graph is NP-complete, while recognizing if a graph is a proper $k$-color-line graph is polynomially solvable. In Section~\ref{sec:proper-2-color-line} we show that proper $2$-color-line graphs admit a characterization by forbidden induced subgraphs and can be recognized in linear time. Section~\ref{sec:open} lists some open problems for further research.

\paragraph{Definitions and notation} In a graph, a set of vertices is a {\em clique\/}, respectively, an {\em independent set\/} if every pair, respectively, no pair of vertices in this set is adjacent. A \emph{co-bipartite} graph is the complement of a bipartite graph. 
The complete graph with $n$ vertices is denoted by $K_n$; $K_3$ is also called a {\it triangle}, and $K_n-e$ is the graph obtained from $K_n$ by deleting an edge. The path and cycle with $n$ vertices 
are denoted by $P_n$ and $C_n$, respectively. We use $K_{p,q}$ to denote the complete bipartite graph with $p$ vertices of one color class and $q$ vertices of the second color class. A {\it biclique\/} is a $K_{p,q}$ for some $p,q$, and a {\it $q$-star\/} is a $K_{1,q}$; the $3$-star $K_{1,3}$ is also called a {\it claw\/}. 

Let $G = (V, E)$ be a graph. For a vertex $v\in V$ we write $N(v)$ for the set of its neighbors in $G$. A \emph{universal\/} vertex $v$ is one such that $N(v)\cup\{v\}=V$. An {\it $r$-regular graph\/} is one in which each vertex has degree exactly $r$.

For a subset $U\subseteq V$ we write $G[U]$ for the subgraph of $G$ induced by $U$ and $G-U$ for the graph $G[V -U]$; for a vertex $v$ we write $G-v$ rather than $G[V\setminus \{v\}]$.  
For a subset $F\subseteq E(G)$, $G\setminus F$ is the spanning subgraph of $G$ obtained from $G$ by deleting all edges in $F$; for an edge $e$ we write $G-e$ rather than $G\setminus\{e\}$. Sometimes we identify a set of vertices $A\subseteq V(G)$ with the subgraph induced by $A$. Thus, $E(A)$ means $E(G[A])$ and, for an edge $e$, $e\in A$ means $e\in E(A)$. A set of vertices is {\it nontrivial} if it contains at least two vertices.

\section{Color-line Graphs and Proper Color-line Graphs}\label{sec:color-line}

In this section we discuss some basic properties of color-line and proper color-line graphs. Among other things we give Krausz-type characterizations for these graphs.

The following basic facts follow immediately from definition; we often use these facts without reference:

\begin{enumerate}
  \item For any edge colored graph $H$, $L(H)$ is a spanning subgraph of $\CL(H)$.\label{fact1} 
  \item Every line graph is a proper color-line graph (consider an edge coloring of a root graph $H$ in which each edge has its own color).\label{fact2} 
  \item Induced subgraphs of a (proper) color-line graph are again (proper) color-line graphs.\label{fact3} 
  \item If $H'$ is a subgraph of $H$ then $\CL(H')$ is an induced subgraph of $\CL(H)$.\label{fact4} 
  \item $G$ is a (proper) color-line graph if and only if each component of $G$ is a (proper) color-line graph.\label{fact5}  
\end{enumerate}


\begin{theorem}[Krausz-type characterization for color-line graphs]\label{thm:color-line}
A graph $G$ is a color-line graph if and only if there exists a family of cliques $\mathcal{Q}$ in $G$ such that
  \begin{itemize}
    \item[\em (a)] every vertex of $G$ belongs to at most three members in $\mathcal{Q}$, and every edge of $G$ belongs to at least one and at most two members in $\mathcal{Q}$,
    \item[\em (b)] there are two mappings $F \rightarrow \mathcal{Q}$, $e \mapsto Q_e$, and $W \rightarrow \mathcal{Q}$, $v \mapsto Q_v$, where $F\subseteq E(G)$ and $W\subseteq V(G)$ is the set of all edges belonging to exactly two members, respectively, the set of all vertices belonging to exactly three members in $\mathcal{Q}$, such that
     \begin{itemize}
       \item[\em (b1)] for all $x\in F\cup W$, $x\in Q_x$, and
       \item[\em (b2)] for all $x, y\in F\cup W$, if $x\in Q_y$ then $Q_x=Q_y$ and if $x\not\in Q_y$ then $Q_x\cap Q_y=\emptyset$.
     \end{itemize}
  \end{itemize}
\end{theorem}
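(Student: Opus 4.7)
The proof splits into two directions. For necessity, given $G = \CL(H)$ with edge coloring $\phi$, I would take $\mathcal{Q}$ to consist of the non-trivial incidence cliques $\{e \in E(H) : v \in e\}$ for $v \in V(H)$, together with the non-trivial color cliques $\{e \in E(H) : \phi(e) = c\}$. Each edge $e = uv \in E(H)$ lies in at most the three cliques corresponding to $u$, $v$ and $\phi(e)$, so every vertex of $G$ belongs to at most three members of $\mathcal{Q}$ and every edge of $G$ to at most two, giving (a). For (b) I would send each $x \in W \cup F$ to the color clique containing it; then (b1) is immediate, and (b2) follows from the pairwise disjointness of distinct color cliques.

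The harder direction is sufficiency. Given $G$ and $\mathcal{Q}$ satisfying (a), (b), my plan is to partition $\mathcal{Q}$ as $\mathcal{Q}^\ast \cup \mathcal{Q}^{\mathrm{inc}}$, where $\mathcal{Q}^\ast = \{Q_y : y \in F \cup W\}$ will play the role of color classes and $\mathcal{Q}^{\mathrm{inc}} = \mathcal{Q} \setminus \mathcal{Q}^\ast$ the role of incidence cliques. Condition (b2) implies that distinct members of $\mathcal{Q}^\ast$ are vertex-disjoint, because $Q_{y_1} \neq Q_{y_2}$ forces $y_1 \notin Q_{y_2}$ and hence $Q_{y_1} \cap Q_{y_2} = \emptyset$; in particular every vertex lies in at most one member of $\mathcal{Q}^\ast$. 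Combined with (a) this forces every vertex to lie in at most two members of $\mathcal{Q}^{\mathrm{inc}}$: a vertex in three incidence cliques and no color clique would be in $W$, and then $Q_v \in \mathcal{Q}^\ast$ would already contain $v$, a contradiction.

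From this structure I would construct $H$ with vertex set $\mathcal{Q}^{\mathrm{inc}}$ augmented by private \emph{stub} vertices, so that each $v \in V(G)$ has exactly two designated endpoints (the members of $\mathcal{Q}^{\mathrm{inc}}$ containing $v$, padded with stubs if there are fewer than two). Each $v \in V(G)$ becomes the edge of $H$ joining its two designated endpoints, colored by the unique member of $\mathcal{Q}^\ast$ containing $v$ if any such member exists, otherwise by a fresh private color. Then two vertices $e, f$ of $G$ share an $H$-endpoint iff they share a member of $\mathcal{Q}^{\mathrm{inc}}$, and share a color iff they share a member of $\mathcal{Q}^\ast$; hence they are adjacent in $\CL(H)$ iff they share some member of $\mathcal{Q}$, which by (a) and the fact that members of $\mathcal{Q}$ are cliques of $G$ happens precisely when $ef \in E(G)$.

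The main obstacle will be verifying that the construction yields a \emph{simple} graph $H$, i.e., no pair of edges shares both endpoints. If edges $e, f$ did share both endpoints, then $e, f$ would lie in two common members of $\mathcal{Q}^{\mathrm{inc}}$; the edge $ef$ would then belong to exactly two members of $\mathcal{Q}$ (by (a)), putting $ef \in F$ and making $Q_{ef} \in \mathcal{Q}^\ast$ coincide with one of the two shared incidence cliques, contradicting $\mathcal{Q}^\ast \cap \mathcal{Q}^{\mathrm{inc}} = \emptyset$.
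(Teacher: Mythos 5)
Your proof is correct. The necessity direction coincides with the paper's: the clique family consists of the nontrivial incidence cliques and color classes, and the designated maps send each element of $F\cup W$ to its color class, with (b2) coming from the pairwise disjointness of color classes. For sufficiency the paper takes a shortcut you do not: it deletes from $G$ every edge covered only by a designated clique $Q_x$, removes $\{Q_x : x\in F\cup W\}$ from $\mathcal{Q}$, observes that the residual family satisfies Krausz's condition, and invokes Theorem~\ref{thm:line1}(B) to get $G'=L(H)$ before coloring the edges of $H$ according to the $Q_x$'s. You instead build $H$ directly from $\mathcal{Q}^{\mathrm{inc}}$ by the clique-as-vertex construction with private stub vertices, which amounts to inlining the standard proof of Krausz's theorem; the two routes produce essentially the same root graph. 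Your preliminary observations---that distinct members of $\mathcal{Q}^{\ast}$ are pairwise disjoint and that consequently every vertex lies in at most two members of $\mathcal{Q}^{\mathrm{inc}}$---are exactly the facts the paper needs (and states only in passing, ``by (a) and (b1)'') to see that $\mathcal{Q}'$ satisfies Krausz's hypotheses. What your version buys is self-containedness and an explicit verification that $H$ is simple (a doubly covered pair of edges would force an edge of $F$ whose designated clique lies simultaneously in $\mathcal{Q}^{\ast}$ and $\mathcal{Q}^{\mathrm{inc}}$), a point the paper leaves hidden inside the citation; what the paper's version buys is brevity.
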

\begin{proof} 
\emph{Necessity}: 
Let $G=\CL(H)$ with respect to an (arbitrary) edge coloring $\phi$ of $H$. Then, for each vertex $v=ab\in E(H)=V(G)$, each of the three sets 
\begin{align*}
Q_v &= \{e\in E(H)\mid \phi(e)=\phi(ab)\},\\
Q_a &= \{e\in E(H)\mid a\in e\}, \text{ and}\\
Q_b &=\{e\in E(H)\mid b\in e\},
\end{align*}
is a clique in $G$. Let $\mathcal{Q}$ consist of all such nontrivial cliques.  Moreover, 
\begin{equation*}
W = \{v\in V(G) \mid v=ab\in E(H), \text{ $Q_v, Q_a$, and $Q_b$ are nontrivial}\}
\end{equation*} 
is the set of all vertices of $G$ belonging to exactly three members in $\mathcal{Q}$, and 
\begin{equation*}
F = \{uv\in E(G)\mid \text{$u=ab, v=bc\in E(H), \phi(ab)=\phi(bc)$}\}
\end{equation*} 
is the set of all edges of $G$ belonging to exactly two members in $\mathcal{Q}$,  namely $Q_{u}=Q_v$ and $Q_b$; note that $u$ and $v$ need not to belong to $W$. 
With these definitions (a) is satisfied. Further, the mappings 
\begin{equation*}
v\mapsto Q_v,\, v\in W,
\end{equation*}
and 
\begin{equation*}
e\mapsto Q_e:= Q_u = Q_v,\, e=uv\in F,
\end{equation*} 
 satisfy (b). 

\smallskip
\emph{Sufficiency}: Let $\mathcal{Q}$ be a family of cliques in $G$ which satisfy conditions (a) and (b). Write
\begin{align*}
G'           &= G\setminus\{e\in Q_x\mid e\text{ is only in $Q_x$ for some  $x\in F\cup W$}\},\\
\mathcal{Q}' &= \mathcal{Q}\setminus\{Q_x\mid x\in F\cup W\}. 
\end{align*} 
Then each member of $\mathcal{Q}'$ is a clique in $G'$. Moreover, each edge of $G'$ is in 
exactly one member of $\mathcal{Q}'$ and each vertex of $G'$ is in at most two 
members in $\mathcal{Q}'$ (by (a) and (b1)). 

By Theorem~\ref{thm:line1} (B), $G'=L(H)$ for some graph $H$. Consider the following edge 
coloring $\phi$ of $H$; note that $V(G')=V(G)=E(H)$.
\begin{itemize}
 \item For each $x\in F\cup W$, all edges of $H$ corresponding to vertices of $Q_x$ have the same color; different colors by different $Q_x$'s.
 \item Each other edge of $H$ has its own color.   
\end{itemize}
By (b2), $\phi$ is well-defined. Moreover, with respect to $\phi$, $G=\CL(H)$. 
Indeed, 
\begin{eqnarray*}
 e_1e_2\in E(G)\, & \Leftrightarrow & \, e_1e_2\in E(G')=E(L(H)) \text{ or } e_1e_2\in Q_x \text{ for some $x\in F\cup W$}\\
                &\Leftrightarrow   & \, \text{$e_1$ and $e_2$ are incident in $H$ or $\phi(e_1)=\phi(e_2)$}\\
                &\Leftrightarrow   & \, e_1e_2\in E(\CL(H))
\end{eqnarray*} 
\end{proof}

\smallskip
As an example, the graph $G$ depicted in Figure~\ref{fig:color-line} on the left side has a family $\mathcal{Q}$ satisfying the conditions in Theorem~\ref{thm:color-line}, consisting of the cliques $\{1,2,3,4,5,6\}$, $\{2,3,4,5,6,7\}$, $\{1,8\}$, $\{1,9\}$, $\{7,8\}$ and $\{7,9\}$, with $F=\{23,24,25,26,34,35,36,45,46,56\}$, $W=\{1,7\}$ and $Q_1=\{1,8\}$, $Q_7=\{2,3,4,5,6,7\}=Q_e$ for all $e\in F$. 

The graph $H$ constructed in the proof of Theorem~\ref{thm:color-line} is depicted on the right; the edges of $H$ are colored with three colors as indicated. 
   
\begin{figure}[thb]
\begin{center}
\begin{tikzpicture}[scale=.5] 
\tikzstyle{vertex}=[draw,circle,inner sep=1pt] 
\node[vertex] (1) at (5,1) {\tiny $1$};
\node[vertex] (2) at (1.5,3) {\tiny $2$};
\node[vertex] (3) at (3,3) {\tiny $3$};
\node[vertex] (4) at (4,5) {\tiny $4$};
\node[vertex] (5) at (2,6) {\tiny $5$};
\node[vertex] (6) at (0,5) {\tiny $6$};
\node[vertex] (7) at (5,8) {\tiny $7$};
\node[vertex] (8) at (6.5,4) {\tiny $8$};
\node[vertex] (9) at (8,4) {\tiny $9$};

\draw (1) -- (2); \draw (1) -- (3); \draw (1) -- (4); \draw (1) -- (5);
\draw (1)to[bend angle=40, bend left](6); \draw (1) -- (8); \draw (1) -- (9);
 
\draw (7) -- (2); \draw (7) to[bend angle=30, bend left](3); \draw (7) -- (4); \draw (7) -- (5);
\draw (7) to[bend angle=20, bend right](6); \draw (7) -- (8); \draw (7) -- (9);

\draw (2) -- (3) -- (4) -- (5) -- (6) -- (2) -- (4) -- (6) -- (3) -- (5) -- (2);
\end{tikzpicture}
\qquad\qquad
\begin{tikzpicture}[scale=.5] 
\tikzstyle{vertex}=[draw,circle,inner sep=1.5] 
\node[vertex] (1) at (0,3) {};  
\node[vertex] (2) at (2,3) {}; 
\node[vertex] (3) at (4,3) {}; 
\node[vertex] (4) at (6,3) {}; 
\node[vertex] (5) at (8,3) {}; 
\node[vertex] (6) at (5,0) {}; 
\node[vertex] (7) at (9,0) {}; 
\node[vertex] (8) at (10,3) {}; 
\node[vertex] (9) at (6,6) {}; 
\node[vertex] (10) at (10,6) {}; 

\draw[line width=2pt, draw=lightgray] (6) -- node[left] {\tiny $6$\,\,\,} (1); 
\draw[line width=2pt, draw=lightgray] (6) -- node[right] {\tiny $5$} (2); 
\draw[line width=2pt, draw=lightgray] (6) -- node[right] {\tiny \!$4$} (3); 
\draw[line width=2pt, draw=lightgray] (6) -- node[right] {\tiny \!$3$} (4); 
\draw[line width=2pt, draw=lightgray] (6) -- node[right] {\tiny $2$} (5); 
\draw[line width=2pt, draw=lightgray] (8) -- node[left] {\tiny $7$\,\,\,}(9); 

\draw[line width=1.5pt, style=densely dashed, draw=brown] (6) -- node[below] {\tiny $1$} (7); 
\draw[line width=1.5pt, style=densely dashed, draw=brown] (9) -- node[above] {\tiny $8$} (10);

\draw[line width=2.5pt] (7) -- node[right] {\tiny $9$} (8);  
\end{tikzpicture}
\end{center}
\caption{An example illustrating Theorem~\ref{thm:color-line}: The graph $G$ (left) and a color-line root graph $H$ (right).}
\label{fig:color-line}
\end{figure}
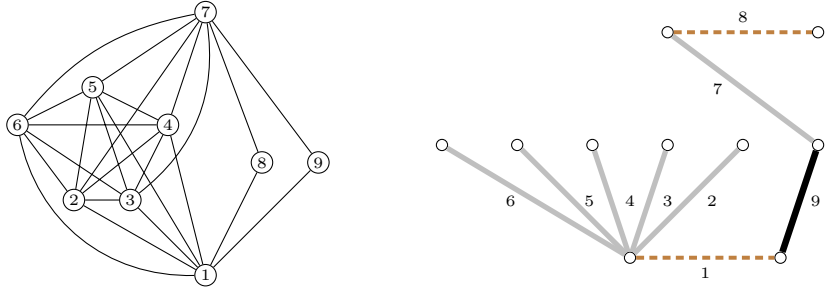

\begin{remark}\label{rem:1}
In case $F=W=\emptyset$, Theorem~\ref{thm:color-line} is precisely  Krausz's characterization for line graphs (Theorem~\ref{thm:line1} (B)). 
\end{remark}

\begin{remark}\label{rem:2}
Theorem~\ref{thm:color-line} implies that $K_{1,4}$ is the smallest graph that is not a color-line graph. Thus, color-line graphs are $K_{1,4}$-free.

Moreover, not every co-bipartite graph, hence not every claw-free graph, is a color-line graph. The complement of $6K_2$ is such an example.
\end{remark}

\noindent
Recall that a \emph{proper color-line graph} is a color-line graph $G=\CL(H)$ of some \emph{properly} edge colored graph $H$. 
Clearly, proper color-line graphs are color-line graphs (but not the converse). 

In the proof of the necessity part of Theorem~\ref{thm:color-line}, the set $F$ is empty in case the edge coloring $\phi$ of $H$ is proper. Hence Theorem~\ref{thm:color-line} can be stated in a slightly simpler form for proper color-line graphs as follows.

\begin{theorem}[Krausz-type characterization for proper color-line graphs]\label{thm:proper-color-line1}
A graph $G$ is a proper color-line graph if and only if there exists a family of cliques $\mathcal{Q}$ in $G$ such that
  \begin{itemize}
    \item[\em (a)] every vertex of $G$ belongs to at most three members, and every edge of $G$ belongs to exactly one member in $\mathcal{Q}$,
    \item[\em (b)] there is a mapping $W \rightarrow \mathcal{Q}$, $v \mapsto Q_v$, where $W\subseteq V(G)$ is the 
        set of all vertices belonging to exactly three members in $\mathcal{Q}$, such that
     \begin{itemize}
       \item[\em (b1)] $v\in Q_v$, and 
       \item[\em (b2)] for all $u, v\in W$, if $u\in Q_v$ then $Q_u=Q_v$ and if $u\not\in Q_v$ then 
                       $Q_u\cap Q_v=\emptyset$. 
     \end{itemize}
  \end{itemize}
\end{theorem}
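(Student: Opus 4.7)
The plan is to follow closely the structure of the proof of Theorem~\ref{thm:color-line}, exploiting the extra assumption of properness to simplify the argument, since in the proper case the set $F$ of edges belonging to two cliques disappears.

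For the necessity direction, assume $G=\CL(H)$ for a properly edge-colored graph $(H,\phi)$. As in the proof of Theorem~\ref{thm:color-line}, for each $v=ab\in V(G)=E(H)$ I would form the color clique $Q_v=\{e\in E(H):\phi(e)=\phi(ab)\}$ together with the incidence cliques $Q_a$ and $Q_b$, and take $\mathcal{Q}$ to be the family of all nontrivial such cliques. That every vertex lies in at most three cliques is immediate. The essential use of properness is this: were an edge $e_1e_2$ of $G$ to lie simultaneously in an incidence clique $Q_a$ and a color clique $Q_v$, then $e_1$ and $e_2$ would be two edges of $H$ both containing the vertex $a$ and sharing the color $\phi(ab)$, contradicting properness. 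Hence every edge of $G$ lies in exactly one member of $\mathcal{Q}$, which gives (a); the mapping $v\mapsto Q_v$ on $W$ satisfies (b1) and (b2) by the same verification as in Theorem~\ref{thm:color-line}.

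For sufficiency I would mimic the construction from the proof of Theorem~\ref{thm:color-line} specialized to $F=\emptyset$. Set $\mathcal{Q}'=\mathcal{Q}\setminus\{Q_x:x\in W\}$ and let $G'$ be obtained from $G$ by deleting every edge lying in some $Q_x$ with $x\in W$; by (a) these deleted edges lie in no other clique. Then $\mathcal{Q}'$ still covers each edge of $G'$ exactly once and each vertex at most twice, so Theorem~\ref{thm:line1}(B) yields $G'=L(H)$ for some graph $H$. Define an edge coloring $\phi$ of $H$ by assigning, on $E(H)=V(G')$, a single color to all edges whose corresponding vertices in $G$ lie in a common $Q_x$ with $x\in W$ (distinct colors for distinct such classes; well-defined by (b2)), and a private color to every other edge of $H$.

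The one new point beyond the proof of Theorem~\ref{thm:color-line}, and what I expect to be the main step, is to verify that $\phi$ is proper. If $e_1,e_2\in E(H)$ share a vertex, they are adjacent in $L(H)=G'$, so $e_1e_2$ lies in some clique $Q'\in\mathcal{Q}'$. If moreover $\phi(e_1)=\phi(e_2)$, the two vertices $e_1,e_2$ of $G$ must also lie in a common $Q_x$ with $x\in W$; but then the edge $e_1e_2$ of $G$ would belong to two distinct members $Q'$ and $Q_x$ of $\mathcal{Q}$, contradicting the ``exactly one'' part of (a). The identity $G=\CL(H)$ then follows from the same brief equivalence chain as in Theorem~\ref{thm:color-line}, now without the $F$-branch.
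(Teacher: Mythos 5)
Your proposal is correct and follows essentially the same route as the paper: both directions reduce to the proof of Theorem~\ref{thm:color-line} with $F=\emptyset$, and the sufficiency uses the same deletion of the cliques $Q_x$, $x\in W$, followed by Krausz's theorem. Your verification that $\phi$ is proper is just the contrapositive of the paper's observation that, since all edges inside each $Q_x$ are deleted from $G'$, the set $Q_x$ is independent in $G'=L(H)$ and hence corresponds to a matching in $H$.
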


\begin{proof} It remains to discuss the sufficiency part in the proof of Theorem~\ref{thm:color-line}. Let $\mathcal{Q}$ be a family of cliques in $G$ which satisfy the conditions (a) and (b), and define
\begin{align*}
G'           &= G\setminus\{e\in Q_x\mid x\in W\},\\
\mathcal{Q}' &= \mathcal{Q}\setminus\{Q_x\mid x\in W\}. 
\end{align*} 
Then, as in the proof of Theorem~\ref{thm:color-line}, $G'=L(H)$ for some graph $H$. Moreover, the edge coloring $\phi$ of $H$ defined there is proper because now, for each $v\in W$, the edges in $H$ correspondng to vertices in $Q_v$ form a matching in $H$. 
\end{proof}

\smallskip
Again, in case $W=\emptyset$, Theorem~\ref{thm:proper-color-line1} is precisely  Krausz's characterization for line graphs (Theorem~\ref{thm:line1} (B)).
 
The family $\mathcal Q$ in Theorem~\ref{thm:proper-color-line1} is a certain edge clique partition of $G$. Proper color-line graphs also admit a characterization in terms of a certain vertex clique  partition as follows. 
Let $\phi$ be an edge coloring of a graph $H$. An edge $xy$ of $\CL(H)$ is an L-edge if the corresponding edges $x$ and $y$ in $H$ are incident and is a C-edge if $\phi(x)=\phi(y)$. In general, an edge in $\CL(H)$ may be an L-edge and a C-edge at the same time (these edges form the set $F$ in Theorem~\ref{thm:color-line}). The difference between $\CL(H)$ and $L(H)$ in case where the edge coloring $\phi$ of $H$ is proper is that, in $\CL(H)$, the set of all L-edges is disjoint from the set of all C-edges. The following characterization of proper color-line graphs is basically this fact.
   
\begin{theorem}\label{thm:proper-color-line2}
A graph $G$ is a proper color-line graph if and only if $G$ admits a vertex clique partition $\mathcal C$, $V(G)=\bigcup_{Q\in\mathcal C} Q$, such that the graph $G\setminus\big(\bigcup_{Q\in\mathcal C} E(Q)\big)$ is a line graph.
\end{theorem}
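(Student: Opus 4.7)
The plan is to prove both directions essentially by reading off the natural correspondence between color classes of a proper edge coloring and cliques of ``same color'' vertices in $\CL(H)$. The key structural observation driving everything is that when $\phi$ is proper, the set of L-edges and the set of C-edges of $\CL(H)$ are disjoint: if two edges of $H$ share a vertex, properness forbids them from sharing a color.

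For the necessity direction, I would start from $G=\CL(H)$ with a proper edge coloring $\phi$ of $H$. Let $\mathcal{C}$ consist of one clique per color, namely $Q_c = \{e \in E(H) \mid \phi(e)=c\}$, viewed as a subset of $V(G)=E(H)$. Each $Q_c$ is a clique in $G$ because same-colored edges are joined in $\CL(H)$, and the $Q_c$ partition $V(G)$ since $\phi$ assigns exactly one color to each edge. Because $\phi$ is proper, each color class $Q_c$ is a matching in $H$, so no two of its members are incident, i.e.\ no two of its members are adjacent in $L(H)$. Hence every edge of $G$ lying inside some $Q_c$ is a pure C-edge (not an L-edge), and conversely the edges of $G$ joining vertices of distinct cliques are precisely the L-edges. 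Removing $\bigcup_{Q\in\mathcal{C}} E(Q)$ therefore deletes exactly the C-edges and leaves $L(H)$.

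For the sufficiency direction, assume $\mathcal{C}$ is a vertex clique partition of $G$ with $G' := G\setminus\bigl(\bigcup_{Q\in\mathcal{C}} E(Q)\bigr) = L(H)$ for some graph $H$; note that $V(G)=V(G')=V(L(H))=E(H)$. Define an edge coloring $\phi$ of $H$ by giving all edges of $H$ corresponding to vertices of a common $Q\in\mathcal{C}$ the same color, with distinct colors for distinct members of $\mathcal{C}$. I must verify two things. First, $\phi$ is proper: if $u,v\in Q\in\mathcal{C}$ are distinct, then $uv\in E(G)$ (since $Q$ is a clique) but $uv\notin E(G')$, so $u,v$ are not adjacent in $L(H)$, meaning the corresponding edges of $H$ are non-incident. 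Hence each color class of $\phi$ is a matching. Second, $\CL(H)=G$: for any two vertices $u,v$, we have $uv\in E(\CL(H))$ iff $u,v$ are incident in $H$ or $\phi(u)=\phi(v)$, iff $uv\in E(L(H))=E(G')$ or $u,v$ lie in a common $Q\in\mathcal{C}$, iff $uv\in E(G)$.

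I expect no major obstacle; the argument is essentially a bookkeeping exercise built on top of Theorem~\ref{thm:line1}(B) and the definition of $\CL(H)$. The only point that needs care is the properness verification in the sufficiency direction, where one must explicitly use that $E(Q)$ is removed from $G$ to conclude that same-clique vertices correspond to non-incident edges of $H$; this is precisely where the hypothesis ``$G\setminus\bigcup E(Q)$ is a line graph'' (rather than a weaker condition) is used.
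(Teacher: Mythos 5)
Your proposal is correct and follows essentially the same route as the paper's proof: the same color-class cliques $Q_c$ for necessity, and the same ``one color per clique'' edge coloring for sufficiency, with properness verified via the fact that each $Q\in\mathcal{C}$ is independent in $G'=L(H)$ and hence a matching in $H$. The only difference is cosmetic --- you spell out the final chain of equivalences showing $\CL(H)=G$, which the paper leaves implicit.
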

\begin{proof}
\emph{Necessity}: 
Let $G=\CL(H)$ with respect to a proper edge coloring $\phi$ of $H$. Then, for each color $c$ used by $\phi$, \begin{equation*}
Q_c= \{e\in E(H)\mid \phi(e)=c\}
\end{equation*}
is a clique in $G$, and $\mathcal C=\{Q_c\mid \text{color $c$ used by } \phi\}$ is a partition of $V(G)$ into cliques $Q_c$. As $\phi$ is a proper coloring, each $Q_c$ is a matching in $H$. Hence, by definition of $\CL(H)$ and $L(H)$, $G\setminus\bigcup_{Q\in\mathcal C} E(Q)$ is the line graph $L(H)$.

\smallskip
\emph{Sufficiency}: Let $G$ have a clique partition $V(G)= Q_1\cup\ldots \cup Q_r$ such that $G'=G\setminus \bigcup_{i} E(Q_i)$ is the line graph $L(H')$ for some graph $H'$. For each $i$, color all edges of $H'$ corresponding to vertices in $Q_i$ with the same color $i$. Note that this edge coloring of $H'$ is well defined (as each vertex of $G$ belongs to exactly one $Q_i$) and proper (as, for each $i$, $Q_i$ is an independent set in $G'$, hence edges corresponding to vertices in $Q_i$ form a matching in $H'$). Letting $H$ denote the obtained edge colored graph, $G=\CL(H)$. 
\end{proof}

\smallskip
As an example, consider the graph $G=K_6-e$ with vertices $v_1, \ldots, v_6$, and $v_1$ and $v_6$ are the two non-adjacent vertices. Then the edge clique  partition $\mathcal{Q}$ consisting of $\{v_1,v_2,v_3\}$, $\{v_1,v_4,v_5\}$, $\{v_3,v_4,v_6\}$, $\{v_2,v_5,v_6\}$, $\{v_2,v_4\}$ and $\{v_3,v_5\}$ satisfies the conditions of Theorem~\ref{thm:proper-color-line1}, and the vertex clique partition $\mathcal{C}$ consisting of $\{v_1\}$, $\{v_6\}$, $\{v_2,v_3\}$ and $\{v_4,v_5\}$ satisfies the conditions of Theorem~\ref{thm:proper-color-line2}. Both $\mathcal{Q}$ and $\mathcal{C}$ give the proper color-line root graph $H=K_4$, properly colored with four colors, for $G$.  

Recall that line graphs are proper color-line graphs (this follows also from Theorem~\ref{thm:proper-color-line2} by considering the vertex clique partition consisting of all one-vertex cliques). Another example is the following:
\begin{corollary}\label{cor:cubic} 
Cubic graphs without bridges are proper color-line graphs.
\end{corollary}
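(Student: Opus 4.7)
The plan is to apply Theorem~\ref{thm:proper-color-line2}, so I would exhibit a vertex clique partition $\mathcal{C}$ of $V(G)$ such that the graph $G\setminus\bigcup_{Q\in\mathcal{C}} E(Q)$ is a line graph. The natural candidate comes from a perfect matching: by Petersen's theorem, every bridgeless cubic graph $G$ has a perfect matching $M$, and I would set $\mathcal{C}=\{\{u,v\}\mid uv\in M\}$. Each member of $\mathcal{C}$ is a $K_2$ in $G$, and since $M$ is perfect, $\mathcal{C}$ partitions $V(G)$, so the clique-partition requirement of Theorem~\ref{thm:proper-color-line2} is met.

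It then remains to check that $G\setminus E(M)$ is a line graph. Since deleting a perfect matching from a cubic graph leaves a $2$-regular spanning subgraph, $G\setminus E(M)$ is a disjoint union of cycles $C_{n_1},\ldots,C_{n_k}$, each of length at least $3$ because $G$ is simple. Because $L(C_n)=C_n$ for every $n\ge 3$ and the line-graph operation distributes over disjoint unions, $G\setminus E(M)$ is itself a line graph. Theorem~\ref{thm:proper-color-line2} then delivers the conclusion.

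The only substantive ingredient is Petersen's theorem; the remainder is an elementary structural observation, so no step should present any real obstacle. If one prefers a direct construction via Theorem~\ref{thm:proper-color-line1}, the same perfect matching yields a properly edge-colored root graph $H$: take $H$ to be the disjoint union of cycles $C_{n_1},\ldots,C_{n_k}$ (so that $L(H)=G\setminus E(M)$), and for each $uv\in M$ assign the same fresh color to the two edges of $H$ corresponding to $u$ and $v$; properness is automatic since $u$ and $v$, being non-adjacent in $L(H)$, correspond to non-incident edges of $H$.
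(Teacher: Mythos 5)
Your proof is correct and follows essentially the same route as the paper: invoke Petersen's theorem to obtain a perfect matching $M$, observe that $G\setminus M$ is $2$-regular and hence a line graph, and conclude via Theorem~\ref{thm:proper-color-line2}. You merely spell out the intermediate steps (the clique partition into the $K_2$'s of $M$, and the fact that a disjoint union of cycles is its own line graph) that the paper leaves implicit.
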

\begin{proof}
By a classical theorem of Petersen~\cite{Pet}, any bridgeless $3$-regular graph $G$ contains a perfect matching $M$. Then, $G\setminus M$ is $2$-regular and thus a line graph.
\end{proof}

\smallskip
Corollary~\ref{cor:cubic} is best possible in the sense that not every cubic graph is a color-line graph. An example is depicted in Figure~\ref{fig:subcubic}.

\begin{figure}[H] 
\begin{center}
\begin{tikzpicture}[scale=.5] 
\tikzstyle{vertex}=[draw,circle,inner sep=1.3pt] 
\node[vertex] (1) at (4,2) {};
\node[vertex] (2) at (2,4) {};
\node[vertex] (3) at (0,2) {};
\node[vertex] (4) at (2,0) {};
\node[vertex] (5) at (3,2) {};
\node[vertex] (6) at (2,3) {};
\node[vertex] (7) at (1,2) {};
\node[vertex] (8) at (2,1) {};
\node[vertex] (9) at (2,2) {};

\draw (1) -- (2) -- (3) -- (4) -- (1);
\draw (5) -- (6) -- (7) -- (8) -- (5);
\draw (2) -- (6); \draw (4) -- (8);
\draw (5) -- (9) -- (7);
\draw (3) to[bend angle=30, bend left] (9);

\node[vertex] (a) at (6,2) {};
\node[vertex] (b) at (8,0) {};
\node[vertex] (c) at (10,2) {};
\node[vertex] (d) at (8,4) {};
\node[vertex] (e) at (7,2) {};
\node[vertex] (f) at (8,1) {};
\node[vertex] (g) at (9,2) {};
\node[vertex] (h) at (8,3) {};
\node[vertex] (i) at (8,2) {};

\draw (a) -- (b) -- (c) -- (d) -- (a);
\draw (e) -- (f) -- (g) -- (h) -- (e);
\draw (b) -- (f); \draw (d) -- (h);
\draw (e) -- (i) -- (g);
\draw (i) to[bend angle=30, bend left] (c);

\node[vertex] (x) at (5,2) {};
\draw (1) -- (x) -- (a);

\node[vertex] (x1) at (5,3) {};
\node[vertex] (x2) at (7,5) {};
\node[vertex] (x3) at (5,7) {};
\node[vertex] (x4) at (3,5) {};
\node[vertex] (x5) at (5,4) {};
\node[vertex] (x6) at (6,5) {};
\node[vertex] (x7) at (5,6) {};
\node[vertex] (x8) at (4,5) {};
\node[vertex] (x9) at (5,5) {};

\draw (x1) -- (x2) -- (x3) -- (x4) -- (x1);
\draw (x5) -- (x6) -- (x7) -- (x8) -- (x5);
\draw (x2) -- (x6); \draw (x4) -- (x8);
\draw (x5) -- (x9) -- (x7);
\draw (x) -- (x1);
\draw (x3) to[bend angle=30, bend left] (x9);

\end{tikzpicture}
\end{center}
\caption{A cubic graph that is not a color-line graph.}
\label{fig:subcubic}
\end{figure}
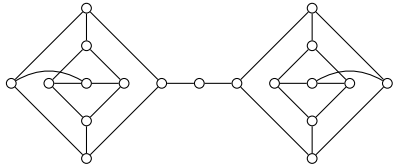

\begin{proposition}\label{prop:K7-e}
$K_7-e$ is not a proper color-line graph. 
\end{proposition}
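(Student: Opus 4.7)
The plan is to assume $K_7-e$ is a proper color-line graph and derive a contradiction via Theorem~\ref{thm:proper-color-line2}. Write $V(K_7-e)=\{a,b,1,\ldots,5\}$ with $e=ab$. By that theorem there is a vertex clique partition $\mathcal C$ of $K_7-e$ such that
\[ G' := (K_7-e) \setminus \bigcup_{Q\in\mathcal C}E(Q) \]
is a line graph; I will show no such $\mathcal C$ can exist.

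First I would argue that every $Q\in\mathcal C$ has $|Q|\le 2$. Since $ab\notin E(K_7-e)$, $Q$ contains at most one of $a,b$. If $|Q|\ge 3$, a short case check (splitting on whether $Q\subseteq\{1,\ldots,5\}$, $Q\ni a$, or $Q\ni b$) produces a vertex $v\notin Q$ adjacent in $G'$ to at least three vertices of $Q$; since the edges of $Q$ were removed, $Q$ is independent in $G'$, so $v$ together with three of its neighbors in $Q$ induces a claw, contradicting that $G'$ is a line graph. Consequently, $M:=\bigcup_{Q\in\mathcal C}E(Q)$ is a matching in $K_7-e$ and $G'=(K_7-e)-M$.

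The core task is thus to show that $(K_7-e)-M$ is not a line graph for any matching $M$ of $K_7-e$. I would apply Theorem~\ref{thm:line1}(C). Claw-freeness of $G'$ is automatic: the non-edges of $G'$ are $\{ab\}\cup M$, and no three of these are pairwise incident (this would force two $M$-edges meeting $a$ or $b$, impossible since $M$ is a matching). So it suffices to find two odd triangles of $G'$ sharing an edge but lying in no common $K_4$. My default construction is to pick $p,q\in\{1,\ldots,5\}$ avoiding the $M$-partners of $a$ and of $b$ (if any) and with $pq\notin M$. Then $T=\{a,p,q\}$ and $T'=\{b,p,q\}$ are triangles sharing $pq$, and $T\cup T'=\{a,b,p,q\}$ fails to be $K_4$ because $ab$ is a non-edge. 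Oddness of $T$ (resp.~$T'$) is witnessed by the $M$-partner of $b$ (resp.~of $a$) when it exists, and otherwise by any $v\in\{1,\ldots,5\}\setminus\{p,q\}$ that is not $M$-matched to $p$ or to $q$.

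The main obstacle is the pathological case where $M$ has an edge $ax$ at $a$ together with a perfect matching $\{ij\},\{k\ell\}$ of the remaining four vertices of $\{1,\ldots,5\}$ (and the symmetric $b$-case). There every vertex of $\{1,\ldots,5\}\setminus\{p,q\}$ is $M$-matched to $p$ or $q$, and $b$ has no $M$-partner, so no odd witness is available for $T=\{a,p,q\}$. In this case I would instead take the non-edge $\{i,j\}\in M$ and use the triangles $T=\{b,i,k\}$ and $T'=\{b,j,k\}$, which share $\{b,k\}$ and whose union $\{b,i,j,k\}$ is not $K_4$ since $ij\in M$; both are odd, each witnessed by $x$. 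Together with the default, this covers every matching $M$, completing the contradiction.\qed
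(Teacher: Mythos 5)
Your proposal is correct and follows the same high-level route as the paper: invoke Theorem~\ref{thm:proper-color-line2}, show that every clique of the partition has at most two vertices (otherwise a claw survives in the residual graph), and then show that deleting a matching $M$ from $K_7-e$ never yields a line graph. Where you differ is only in the final certificate. The paper's (unwritten) case analysis exhibits an induced $K_5-e$ in $(K_7-e)-M$; indeed, the non-edges of the residual form the graph $\{ab\}\cup M$ with at most four edges and with $a$ or $b$ on any two that meet, so one can always delete two vertices leaving exactly one non-edge among the remaining five. You instead apply Theorem~\ref{thm:line1}(C) and produce two odd triangles sharing an edge whose union misses an edge. Both certificates work; the $K_5-e$ route is marginally shorter to verify, while yours makes the exceptional matchings explicit.

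One slip to repair in the write-up: for $T=\{a,p,q\}$ with $b$ unmatched, your stated witness ``any $v\in\{1,\ldots,5\}\setminus\{p,q\}$ not $M$-matched to $p$ or to $q$'' must also exclude the $M$-partner $x$ of $a$, since such an $x$ sees only $p$ and $q$ in $T$ and hence witnesses nothing. Correspondingly, your description of the pathological case (``every vertex of $\{1,\ldots,5\}\setminus\{p,q\}$ is $M$-matched to $p$ or $q$'') should read ``matched to $a$, $p$ or $q$'': in that case $x$ is matched to $a$, not to $p$ or $q$. The delimitation of the exceptional matchings $M=\{ax,ij,k\ell\}$ and your repair via $T=\{b,i,k\}$, $T'=\{b,j,k\}$ witnessed by $x$ are correct, so this is purely a matter of phrasing, not a gap in the argument.
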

\begin{proof}
By case analysis, using Theorem~\ref{thm:proper-color-line2}, it can be shown that, for any vertex clique partition $\mathcal C$ of $G=K_7-e$, $G\setminus\big(\bigcup_{Q\in\mathcal C} E(Q)\big)$ is not a line graph (it contains an induced claw or an induced $K_5-e$). 
\end{proof}

\smallskip 
Since a graph may have exponentially many cliques, it is not clear how the characterizations given in Theorems~\ref{thm:color-line}, \ref{thm:proper-color-line1} and~\ref{thm:proper-color-line2} could imply polynomial time recognition algorithms for color-line graphs and proper color-line graphs. Thus, the computational complexity of the following problems is still open. 

\medskip\noindent
\CLG\\[.7ex]
\begin{tabular}{l l}
{\em Instance:}& A graph $G=(V,E)$.\\
{\em Question:}& Is $G$ a color-line graph, i.e., is $G=\CL(H)$ for some\\ 
               & edge colored graph $H$?\\
\end{tabular}

\bigskip\noindent
\PCLG\\[.7ex]
\begin{tabular}{l l}
{\em Instance:}& A graph $G=(V,E)$.\\
{\em Question:}& Is $G$ a proper color-line graph, i.e., is $G=\CL(H)$ for some\\               & properly edge colored graph $H$?\\
\end{tabular}

\medskip
A first natural step in studying the complexity of \CLG\ and of \PCLG\ is to specify the number of colors in the (proper) color-line root graph. This leads to the notion of (proper) $k$-color-line graphs which we will address in the next sections.

\section{Recognizing $k$-Color-line Graphs is Hard}\label{sec:k-color-line}
Let $k\ge 1$ be an integer. A graph $G$ is said to be a ({\em proper\/}) {\em $k$-color-line graph\/} if $G$ is (isomorphic to) the color-line graph of some ({\em properly\/}) {\em edge $k$-colored graph\/} $H$. For example, the graph $G$ depicted in Figure~\ref{fig:color-line} is a $3$-color-line graph and $K_6-e$ is a proper $4$-color-line graph (cf. example after the proof of Theorem~\ref{thm:proper-color-line2}). 

By definition, $1$-color-line graphs are precisely the complete graphs. We are going to point out that, for any fixed $k\ge 2$, the decision problem

\medskip\noindent
\textsc{$k$-color-line graph}\\[.7ex]
\begin{tabular}{l l}
{\em Instance:}& A graph $G=(V,E)$.\\
{\em Question:}& Is $G$ a $k$-color-line graph, i.e., is $G=\CL(H)$ for some\\
               & edge $k$-colored graph $H$?\\
\end{tabular}

\medskip\noindent
is NP-complete. 

First, the following observation allows us to consider the case $k=2$ only; the graph $G+K_1$ is obtained from $G$ by adding a new (isolated) vertex. 

\begin{observation}\label{obs:ktok+1}
$G$ is a (proper) $k$-color-line graph if and only if $G+K_1$ is a (proper) $k+1$-color-line graph. 
\end{observation}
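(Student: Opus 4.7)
The plan is to prove both directions by a direct construction that adds or removes a single ``lonely'' edge.

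\textbf{Forward direction.} Suppose $G = \CL(H)$ where $H$ uses $k$ colors (and properly, if we are in the proper case). Form $H'$ from $H$ by adding a new connected component consisting of a single edge $e=xy$ on two fresh vertices $x,y$, and colouring $e$ with a brand new colour $k+1$ not used on $H$. Then $H'$ uses $k+1$ colours, and if the colouring of $H$ was proper then so is that of $H'$, since $e$ lies in its own component and its colour is new. The vertex of $\CL(H')$ corresponding to $e$ has no neighbours: $e$ is incident in $H'$ to no other edge (its endpoints have degree $1$), and no other edge has colour $k+1$. All other adjacencies of $\CL(H')$ coincide with those of $\CL(H)$. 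Hence $\CL(H') \cong G + K_1$.

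\textbf{Backward direction.} Conversely, suppose $G+K_1 = \CL(H')$ for some (properly) edge $(k+1)$-coloured graph $H'$. Let $v$ be the isolated vertex of $G+K_1$, and let $e=xy \in E(H')$ be the edge of $H'$ corresponding to $v$. Since $v$ has no neighbour in $\CL(H')$, the edge $e$ is incident in $H'$ to no other edge (so $x$ and $y$ are isolated in $H'-e$) and no other edge of $H'$ shares its colour. Let $H := H' - e$ (equivalently, delete $e$ together with the now-isolated vertices $x,y$). Then $H$ uses at most $k$ colours, the induced colouring is still proper when the original was, and $\CL(H)$ is obtained from $\CL(H')$ by deleting the isolated vertex $v$. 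Thus $\CL(H) \cong G$, so $G$ is a (proper) $k$-color-line graph.

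\textbf{Main obstacle.} There is no real obstacle; the statement is essentially a bookkeeping observation whose only mild subtlety is checking that the singled-out edge $e$ used in each direction is truly isolated in $\CL$, which reduces to checking both the incidence condition and the colour-uniqueness condition. The construction treats the proper and non-proper cases uniformly because the extra edge is placed in its own component with a new colour, so no properness violation can arise.
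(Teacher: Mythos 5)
Your proposal is correct and follows essentially the same route as the paper's own proof: add a new isolated edge with a fresh colour for the forward direction, and observe that the edge corresponding to the isolated vertex must be incident to no other edge and alone in its colour class for the backward direction. No issues.
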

\begin{proof} 
Let $v$ be the new vertex in $G+K_1$. Suppose $G=\CL(H)$ with an edge coloring $\phi$ of $H$. Then $G+K_1=\CL(H')$, where $H'$ is obtained from $H$ by adding a new (isolated) edge $e$ and extending $\phi$ to $\phi'$ by coloring $e$ with a new color. Clearly, if $\phi$ is a (proper) $k$-coloring, then $\phi'$ is a (proper) $(k+1)$-coloring. Conversely, suppose $G+K_1=\CL(H)$ with an edge coloring $\phi$ of $H$, and let $e$ be the edge in $H$ corresponding to the vertex $v$ in $G+K_1$. Then $e$ is an isolated edge in $H$ and $\phi(e)$ occurs only on $e$. Thus $G=\CL(H')$, where $H'=H\setminus\{e\}$ with the restriction $\phi'$ of $\phi$ on $E(H)\setminus\{e\}$. Clearly, if $\phi$ is a (proper) $(k+1)$-coloring, then $\phi'$ is a (proper) $k$-coloring. 
\end{proof}

\begin{theorem}\label{thm:2-color-line-NPc}
For any fixed $k\ge 2$, \textsc{$k$-color-line graph} 
is NP-complete. 
\end{theorem}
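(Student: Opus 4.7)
By Observation~\ref{obs:ktok+1}, it suffices to prove the theorem for $k=2$, since a $2$-color-line instance $G$ can be padded to a $k$-color-line instance by adding $k-2$ isolated vertices, and this transformation is polynomial. For membership in NP, observe that any root graph $H$ with $\CL(H)=G$ satisfies $|E(H)|=|V(G)|$, so a certificate consisting of $H$, its $2$-edge-coloring, and a bijection between $E(H)$ and $V(G)$ is of polynomial size. Verification reduces to comparing adjacencies and is straightforward in polynomial time.

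For NP-hardness, the plan is to reduce from a suitable NP-hard problem. The key structural observation guiding the reduction is that $G$ is a $2$-color-line graph if and only if there is a partition $V(G)=R\cup B$ into two cliques (the color classes) together with an assignment $f\colon V(G)\to \binom{U}{2}$ for some set $U$ (the vertex set of the root graph $H$), such that for $r\in R$ and $b\in B$, $f(r)\cap f(b)\neq\emptyset$ iff $rb\in E(G)$. The intra-class non-edges of $G$ impose no constraint because $R$ and $B$ are cliques, but the bipartite cross-incidence pattern between $R$ and $B$ must be realized exactly by pairwise intersections of two-element sets. The reduction will encode a hard combinatorial problem into the joint existence of this partition and this labeling.

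A natural source problem is $3$-edge-colorability of cubic graphs, known to be NP-complete by Holyer's theorem, or a restricted SAT variant such as $1$-in-$3$ SAT. Using variable gadgets, each variable of the source instance would be encoded by a subgraph whose only valid bipartitions into two cliques correspond to the two choices \emph{true} and \emph{false}; clause gadgets then enforce the constraint that the cross-edges from $R$ to $B$ can be realized as pairwise intersections of $2$-sets only when the underlying assignment satisfies the clause. The gadgets are linked through shared vertices so that the same color assignment must be used consistently across the whole construction.

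The main technical obstacle is to design gadgets whose only valid $2$-color-line representations correspond to intended variable assignments, ruling out ``cheating'' representations that use the freedom in labelings or the flexibility of the cliques $R$ and $B$ to bypass the encoded constraints. The forward direction, in which a satisfying assignment yields a $2$-color-line representation, will be a direct construction of $H$ from the assignment. The converse requires careful case analysis, exploiting the rigidity of the partition into two cliques and the combinatorial constraints on realizing a cross-incidence bipartite graph as an intersection pattern of $2$-sets, to extract a satisfying assignment from any valid $2$-color-line representation of the constructed graph.
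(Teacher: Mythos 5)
Your treatment of NP-membership and of the reduction from $k=2$ to general $k$ via Observation~\ref{obs:ktok+1} matches the paper and is fine. The structural observation you isolate is also essentially correct: a graph is a $2$-color-line graph iff its vertices split into two cliques $R,B$ (the color classes) whose cross-adjacency is exactly the incidence pattern of two edge sets on a common vertex set. But from that point on your argument is a plan, not a proof. You name candidate source problems (Holyer, $1$-in-$3$ SAT), assert that variable and clause gadgets ``would'' encode assignments, and then state that the main obstacle is to design gadgets ruling out cheating representations --- without constructing a single gadget or verifying either direction of any reduction. As written, the hardness half of the theorem is entirely missing; there is no instance mapping whose correctness could even be checked.

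The paper closes exactly this gap by not building gadgets at all: it reduces from \textsc{line bigraph}, shown NP-complete by Prisner~\cite{Prisner2003} (even when $E_1\cap E_2=\emptyset$). The condition you identified --- two cliques whose cross-edges are realized by intersections of $2$-element sets --- \emph{is} the line bigraph condition, so the only work needed is to force the clique bipartition to be essentially unique. The paper does this by completing each side $X$, $Y$ of the input bipartite graph to a clique and attaching two pendant-like vertices $x$, $y$ (adjacent to all of $X$, respectively all of $Y$), which pins down the partition and lets both directions of the equivalence be verified in a few lines. If you are not willing to invoke the \textsc{line bigraph} result, you would indeed have to carry out the gadget program you sketch, and that is a substantial construction you have not supplied.
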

\begin{proof}
Note that \textsc{$k$-color-line graph} is obviously in NP. By Observation~\ref{obs:ktok+1} it is enough to see that recognizing $2$-color-line graphs is NP-complete. To see this, we reduce \textsc{line bigraph} to \textsc{$2$-color-line graph}. Let $H_i=(V, E_i)$, $i=1,2$, be two graphs with the same vertex set. The bipartite graph $B$ with the disjoint union $E_1\cup E_2$ as vertex set and two vertices $e_1\in E_1$, $e_2\in E_2$ are adjacent in $B$ if and only if $e_1\cap e_2\not=\emptyset$ is called the {\it line bigraph\/} of $H_1$ and $H_2$. In~\cite{Prisner2003} it is shown that \textsc{line bigraph}, that is to decide if a given bipartite graph is the line bigraph of some graphs $H_1$ and $H_2$, is NP-complete, and \textsc{line bigraph} remains NP-complete in case $E_1\cap E_2=\emptyset$. 

Now, given a bipartite graph $B=(X\cup Y, E)$, let $G$ be obtained from $B$ by completing each $X$ and $Y$ to a clique and by adding two new vertices $x$ and $y$ and adding all edges between $x$ and vertices in $X$ and between $y$ and vertices in $Y$. We claim that $B$ is the line bigraph of two graphs $H_i=(V,E_i)$ with $E_1\cap E_2=\emptyset$ if and only if $G$ is a $2$-color-line graph. 

Indeed, let $B$ be the line bigraph of two graphs $H_i$ with $E_1\cap E_2=\emptyset$. Then let $H$ be obtained from $H_1$ and $H_2$ by adding two new isolated edges $e_x$ and $e_y$ and coloring all edges in $E_1\cup\{e_x\}$ with one color and all edges in $E_2\cup\{e_y\}$ with the second color. Obviously, $G$ is (isomorphic to) the $2$-color-line graph $\CL(H)$ of $H$.

Conversely, suppose $G=\CL(H)$ for some edge $2$-colored graph $H$. 
By construction, $G$ has a unique vertex partition into two cliques. This partition must coincide with the $2$-coloring
of the edges in $H$, say $E(H)=E_1\cup E_2$. We may assume by symmetry that $e_x\in E_1$ and $e_y\in E_2$, where $e_x$ and $e_y$ are the root edges corresponding to $x$ and $y$, respectively. 
Thus, $B$ is the line bigraph of $H_1=(V(H),E_1-e_x)$ and  $H_2=(V(H),E_2-e_y)$.
\end{proof}

\section{Recognizing Proper $k$-Color-line Graphs}\label{sec:proper-k-color-line}
In contrast to the hardness of recognizing $k$-color-line graphs, we show in this section that, for any fixed $k$, recognizing proper $k$-color-line graphs is polynomial. 

Our approach is based on Theorem~\ref{thm:proper-color-line2}, which can be reformulated for proper $k$-color-line graphs as follows.

\begin{theorem}\label{thm:proper-k-color-line}
A graph $G$ is a proper $k$ color-line graph if and only if $G$ admits a vertex clique partition $V(G)=Q_1\cup \ldots \cup Q_k$, such that the graph $G\setminus\big(\bigcup_{1\le i\le k} E(Q_i)\big)$ is a line graph.
\end{theorem}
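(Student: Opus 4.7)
The statement is essentially a parametric refinement of Theorem~\ref{thm:proper-color-line2}, so my plan is to mirror that proof while carefully tracking the correspondence between (i) the number of colors used by the proper edge coloring of the root graph $H$, and (ii) the number of parts in the vertex clique partition of $G$. No new structural ingredient is needed; the only modification is bookkeeping.

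\textbf{Necessity.} Suppose $G = \CL(H)$ for some properly edge-colored graph $H$ with color set $\{1,\dots,k\}$. For each color $c$, I would set
\[
Q_c \;=\; \{\,e \in E(H) \mid \phi(e)=c\,\},
\]
allowing $Q_c = \emptyset$ if color $c$ happens to be unused. Since $\phi$ is proper, $Q_c$ is a matching in $H$, and since two edges with the same color are adjacent in $\CL(H)$, $Q_c$ is a clique in $G$. As each edge of $H$ receives exactly one color, the $Q_c$'s form a vertex clique partition $V(G)=Q_1\cup\dots\cup Q_k$. By construction, the edges removed from $G$ when we delete $\bigcup_c E(Q_c)$ are exactly the C-edges of $\CL(H)$, leaving the incidence (L-edge) structure; hence the residual graph is $L(H)$ verbatim, as in Theorem~\ref{thm:proper-color-line2}.

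\textbf{Sufficiency.} Conversely, given a vertex clique partition $V(G)=Q_1\cup\dots\cup Q_k$ with $G' := G\setminus\bigl(\bigcup_{1\le i\le k} E(Q_i)\bigr)$ a line graph, I would fix $H'$ with $G'=L(H')$ and define the edge coloring $\phi$ of $H'$ by assigning color $i$ to the edge of $H'$ corresponding to each vertex $v\in Q_i$. The coloring uses at most $k$ colors by construction; it is well-defined because the $Q_i$ partition $V(G)=V(G')=E(H')$; and it is proper because $Q_i$ is an independent set of $G'=L(H')$, so the color-$i$ edges of $H'$ form a matching. Finally, two vertices $u,v\in V(G)$ are adjacent in $\CL(H')$ iff the corresponding edges of $H'$ are incident (an edge of $G'=L(H')$) or share a color (an edge of some $G[Q_i]$), i.e.\ iff $uv\in E(G)$; hence $G=\CL(H')$ and $G$ is a proper $k$-color-line graph.

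\textbf{Main obstacle.} There is essentially no obstacle: the argument is a direct adaptation of Theorem~\ref{thm:proper-color-line2}. The only delicate point is a convention: one must allow empty parts in the clique partition so that the equivalence is unbroken when $H$ actually uses fewer than $k$ colors (equivalently, one must read ``properly edge $k$-colored'' as ``properly edge colored with at most $k$ colors''). With that convention fixed, both directions amount to copying the proof of Theorem~\ref{thm:proper-color-line2} and observing that the number of color classes equals the number of parts in the partition.
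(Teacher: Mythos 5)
Your proposal is correct and follows exactly the route the paper intends: the paper states this theorem without a separate proof, presenting it as a direct reformulation of Theorem~\ref{thm:proper-color-line2} whose proof (color classes as cliques in one direction, coloring by parts in the other) is precisely what you reproduce with the $k$-part bookkeeping. Your remark about allowing empty parts (reading ``$k$-colored'' as ``at most $k$ colors'') is a reasonable clarification consistent with the paper's conventions, e.g.\ Observation~\ref{obs:ktok+1}.
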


The key fact in our recognition algorithm is that big enough cliques in a proper $k$-color-line graph must appear in any clique partition as in Theorem~\ref{thm:proper-k-color-line}. More precisely, we show:
\begin{lemma}\label{lem:key}
Suppose $G$ is a proper $k$-color-line graph, and let $Q$ be a maximal clique in $G$ with at least $k+4$ vertices. Then $G-Q$ is a proper $(k-1)$-color-line graph.

Moreover, if $V(G)=Q_1\cup Q_2\cup\ldots \cup Q_k$ is an arbitrary vertex clique partition of a graph $G$ as in Theorem~\ref{thm:proper-k-color-line}, then $Q$ is one of the cliques, $Q_i$, for some $i\in\{1,\ldots, k\}$. 
\end{lemma}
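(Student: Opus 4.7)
\smallskip
\noindent\textbf{Proof plan.}
The plan is to prove the ``moreover'' statement first; the main claim then follows at once. Indeed, once we know that $Q$ equals some $Q_j$ in a partition as in Theorem~\ref{thm:proper-k-color-line}, the remaining cliques $\{Q_i : i\ne j\}$ partition $V(G)\setminus Q$ into $k-1$ cliques, and $(G-Q)\setminus\bigcup_{i\ne j}E(Q_i)$ is an induced subgraph of the line graph $G\setminus\bigcup_i E(Q_i)$, hence itself a line graph, so Theorem~\ref{thm:proper-k-color-line} certifies that $G-Q$ is a proper $(k-1)$-color-line graph.

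To prove the moreover statement I would fix any partition $V(G)=Q_1\cup\cdots\cup Q_k$ with $G':=G\setminus\bigcup_i E(Q_i)$ a line graph, and study how $Q$ meets the partition. The key structural observation is that $G'[Q]$ is a complete multipartite graph whose parts are the nonempty sets $Q\cap Q_i$: $G[Q]$ is complete, and only the within-part edges are deleted in $G'$. Since line graphs are claw-free, so is $G'[Q]$, and a complete multipartite graph is claw-free if and only if it has only one nonempty part or every part has size at most~$2$. In the first case $Q\subseteq Q_j$ for some $j$, and maximality of $Q$ forces $Q=Q_j$, which is what we want.

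The hard part will be ruling out the second alternative. Here I would use pigeonhole: from $|Q|\ge k+4$ and at most $k$ parts each of size at most $2$ one immediately gets at least four parts of size exactly~$2$, say $\{a_i,b_i\}$ for $i=1,\dots,4$. Then $G'$ restricted to $\{a_1,b_1,\dots,a_4,b_4\}$ is $K_8$ with the perfect matching $\{a_ib_i\}$ removed, i.e.\ the complete $4$-partite graph $K_{2,2,2,2}$, and further restricting to $\{a_1,a_2,a_3,a_4,b_1\}$ yields an induced $K_5-e$.

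The only genuinely nontrivial step, and the one I expect to be the main obstacle, is the verification that $K_5-e$ is not a line graph. I would establish this by the van~Rooij--Wilf criterion of Theorem~\ref{thm:line1}(C): writing $K_5-e$ with missing edge $v_4v_5$, the triangles $\{v_1,v_3,v_4\}$ and $\{v_1,v_3,v_5\}$ are both odd (the outside vertex $v_2$ has three neighbors in each), share the edge $v_1v_3$, yet $\{v_1,v_3,v_4,v_5\}$ is not a $K_4$ because $v_4v_5$ is absent. This obstruction forced inside the line graph $G'$ yields a contradiction, ruling out the second alternative and finishing the proof that $Q=Q_j$ for some~$j$.
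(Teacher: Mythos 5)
Your proof is correct, but it takes a genuinely different route from the paper's. The paper argues directly in the root graph $H$: assuming two $Q$-edges meet at a vertex $v$ of $H$, it notes that at least four $Q$-edges avoid $v$ (since a proper $k$-coloring bounds the degree of $v$ by $k$), sorts those edges into three types according to how they are adjacent to the two meeting edges, observes that each type contains at most one edge, and concludes that the $Q$-edges form a matching and hence a single color class; both assertions of the lemma then fall out at once. You instead work entirely at the level of $G$ and a clique partition as in Theorem~\ref{thm:proper-k-color-line}, observing that $G'[Q]$ is complete multipartite with parts $Q\cap Q_i$, invoking claw-freeness to restrict the part structure, and using a pigeonhole count to force an induced $K_5-e$ in $G'$ unless $Q$ is a single part; you also reverse the logical order, deriving the deletion statement from the ``moreover'' statement via the observation that an induced subgraph of a line graph is a line graph. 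Both arguments are sound (your verification via Theorem~\ref{thm:line1}(C) that $K_5-e$ is not a line graph is correct, and the count $|Q|-p\ge k+4-k=4$ parts of size two does yield the required $K_5-e$). The paper's argument is more self-contained and directly produces the root graph $H'$ of $G-Q$, which is what the recognition algorithm actually exploits; your argument is arguably more transparent about where the threshold $k+4$ comes from (it is exactly what guarantees enough size-two parts for the forbidden configuration) and avoids reasoning about $H$ altogether, at the cost of importing the fact that $K_5-e$ is not a line graph.
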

\begin{proof}
Let $G=\CL(H)$ for some graph $H$ with a proper edge $k$-coloring $\phi$.
Write $Q=\{v_1, \ldots, v_s\}$ with $s=|Q|\ge k+4$. To simplify notation, we will also denote the corresponding $Q$-edges in $H$ by $v_i$. Note that for $i\not= j$, $v_i$ and $v_j$ have a common vertex in $H$ (and then $\phi(v_i)\not=\phi(v_j)$), or else $\phi(v_i)=\phi(v_j)$. Hence, in $H$, if $v_i$ and $v_j$ have a common vertex, then any $v_k$ is disjoint from at most one of $v_i$, $v_j$. 

Now assume that two edges $v_i$ and $v_j$ have a common vertex $v$ in $H$. Since $H$ has at most $k$ edges at $v$, at least $4$ of the $Q$-edges do not contain $v$. Let $v_k$ be one such edge. Then either
\begin{enumerate}
\item $v_k\cap v_i\ne \emptyset$ and  $v_k\cap v_j\ne \emptyset$, or
\item $v_k\cap v_i\ne \emptyset$ and  $\phi(v_k)=\phi(v_j)$, or
\item $v_k\cap v_j\ne \emptyset$ and  $\phi(v_k)=\phi(v_i)$.
\end{enumerate}
As there can be at most one edge of each of these three types, we reach a contradiction.

Thus, no two of the $Q$-edges have a common vertex in $H$. Therefore, all $Q$-edges  must form a color class of $\phi$. Hence $G-Q=\CL(H')$, where $H'$ is obtained from $H$ by deleting all $Q$-edges, i.e., $G-Q$ is a proper $(k-1)$-color-line graph. 

For the second part, let $V(G)=Q_1\cup Q_2\cup\ldots \cup Q_k$ be a vertex clique partition of $G$ such that $G\setminus\bigcup_{1\le i\le k} E(Q_i)=L(H)$. Let $\phi$ be the proper edge coloring of $H$ as in the proof of Theorem~\ref{thm:proper-color-line2}: For each $i$, color all edges of $H$ corresponding to vertices in $Q_i$ with the same color $i$. Then $G=CL(H)$ with respect to $\phi$, and the $Q_i$-edges in $H$ form the color classes of $\phi$. Now, as we have seen in the first part, the $Q$-edges form a color class in $H$, hence $Q$ must be one of the cliques $Q_1, \ldots, Q_k$. 
\end{proof}

\smallskip
Note that $k+4$ in the previous Lemma is only sharp for $k=3$. For other values of $k$ slightly better bounds can be achieved, but we skip this step for the sake of better readability.

Now proper $k$-color-line graphs can be recognized as follows: Suppose that a given graph $G$ is a proper $k$-color-line graph. If $G$ has a maximal clique $Q$ with at least $k+4$ vertices, then, by Lemma~\ref{lem:key}, $G-Q$ is a proper $(k-1)$-color-line graph and we can reduce the problem to recognizing proper $(k-1)$-color-line graphs by using Lemma~\ref{lem:key} again. If $G$ is $K_{k+4}$-free, then, by Theorem~\ref{thm:proper-k-color-line}, $G$ can have at most $k\cdot (k+3)$ vertices and we may check all possible $O(k^{k\cdot(k+3)})$ (which is a constant provided $k$ is fixed) vertex partitions into at most $k$ cliques of $G$ if one satisfies the condition in Theorem~\ref{thm:proper-k-color-line}. Algorithm~\ref{alg:proper-k-color-line} gives a more detailed description.

\begin{algorithm}[!ht]
\DontPrintSemicolon
\KwIn{Graph $G=(V,E)$}
\KwOut{A graph $H$ with a proper edge $k$-coloring such that $G=\CL(H)$, or \lq $\mathtt{NO}$\rq\ if $G$ is not a proper $k$-color-line graph}

\medskip
$s := 0;\ R := G$\;
\While{$(s < k)$ and $R$ has a $K_{k-s+4}$}{
   compute a maximal clique $Q$ in $R$ with at least $k-s+4$ vertices\;
   $s:=s+1$\;
   $Q_s:=Q$\;
   $R:= R-Q_s$\;
}
\If{$|V(R)|> (k-s)(k-s+3)$}{
   \Return \lq $\mathtt{NO}$\rq
}
\If{$|V(R)|=0$}{
   $t:=s$
}
\eIf{$R$ has a vertex clique partition $V(R)=Q_{s+1}\cup\ldots\cup Q_t$, for some $s\le t\le k$, such that $G\setminus\bigcup_{1\le i\le t} E(Q_i)=L(H)$ for some graph $H$}{\label{lineno}
   color all edges of $H$ corresponding to vertices in $Q_i$ with the same color $i$, $1\le i\le t$\;
   \Return $H$
}{
   \Return \lq $\mathtt{NO}$\rq
}
\caption{Recognizing proper $k$-color-line graphs\label{alg:proper-k-color-line}}
\end{algorithm}  

As discussed above, the correctness of Algorithm~\ref{alg:proper-k-color-line} follows from Lemma~\ref{lem:key} and Theorem~\ref{thm:proper-k-color-line}. 

Let us roughly determine the running time of the algorithm. The while-loop will be terminated after at most $k$ rounds. Each round of the while-loop, the running time is dominated by line 3, which can be done in time $O(|V(R)|^{k-s+4}(|V(R)|+|E(R)|))$. Thus, the while-loop needs at most $k\cdot O(n^{k+4}(n+m))$ time in total. At line~\ref{lineno}, $|V(R)|\le (k-s)(k-s+3)$. Hence $R$ has at most $(t-s)^{(k-s)(k-s+3)}$ vertex partitions into $t-s\le k$, cliques. Thus, we have at most $O(k^{k(k+3)})$ partitions  $V(R)=Q_{s+1}\cup\ldots\cup Q_t$ for checking if $G\setminus\bigcup_{1\le i\le t} E(Q_i)=L(H)$, each of these tests can be done in $O(n+m)$ time by Theorem~\ref{thm:line2}. 
In summary, we conclude
\begin{theorem}
For fixed $k$, recognizing if an $n$-vertex $m$-edge graph $G$ is a proper $k$-color-line graph can be done in time $O(n^{k+4}(n+m))$. Moreover, if $G$ is a proper $k$-color-line graph, then a properly edge $k$-colored root graph $H$ of $G$ can be found in the same time complexity. 
\end{theorem}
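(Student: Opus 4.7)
My plan is to formalize the correctness and running-time analysis sketched in the paragraphs preceding the theorem, based on Algorithm~\ref{alg:proper-k-color-line}. Soundness is essentially by construction: whenever the algorithm returns a graph $H$, it has already verified that the produced vertex clique partition $Q_1,\dots,Q_t$ (with $t\le k$) satisfies $G\setminus\bigcup_{1\le i\le t} E(Q_i)=L(H)$, and the proper $k$-coloring of $H$ is then exactly the one constructed in the sufficiency proof of Theorem~\ref{thm:proper-color-line2}; hence $G=\CL(H)$ with a proper edge $k$-coloring.

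The bulk of the argument is completeness. I would proceed by induction on the number $s$ of completed iterations of the while-loop, showing that the greedy choice of a maximal clique $Q$ of size at least $k-s+4$ is safe. Fact~\ref{fact3} guarantees that the residual graph $R=G-(Q_1\cup\cdots\cup Q_s)$ remains a proper $(k-s)$-color-line graph whenever $G$ is a proper $k$-color-line graph and the previous committed choices agree with some valid partition. The \emph{moreover} clause of Lemma~\ref{lem:key}, applied in $R$, then forces any such large maximal clique $Q$ to coincide with a color class in every vertex clique partition of $R$ witnessing that $R$ is a proper $(k-s)$-color-line graph, so setting $Q_{s+1}:=Q$ preserves some witnessing partition. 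When the while-loop terminates we are in one of two situations: either $R$ is empty, and the algorithm succeeds immediately; or $R$ is $K_{k-s+4}$-free, in which case Theorem~\ref{thm:proper-k-color-line} requires any witnessing partition of $R$ to use at most $k-s$ cliques, each of size at most $k-s+3$, so $|V(R)|\le (k-s)(k-s+3)$. Since this bound is a constant in $k$, the exhaustive search at line~\ref{lineno} terminates with the correct verdict, again using Theorem~\ref{thm:proper-k-color-line} to justify that testing all partitions suffices.

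For the running time, the while-loop runs at most $k$ times; each iteration is dominated by locating a maximal clique of size at least $k-s+4$, which is achievable in $O(n^{k-s+4}(n+m))$ time by enumerating $(k-s+4)$-subsets, checking whether each spans a clique, and greedily extending one that does. Summing over $s$ yields $O(n^{k+4}(n+m))$ for the peeling phase, which dominates the final stage: there $|V(R)|$ is bounded by a constant depending only on $k$, so only $O(k^{k(k+3)})=O(1)$ partitions need be tested, each in $O(n+m)$ time via the linear-time line-graph recognition of Theorem~\ref{thm:line2}. The main obstacle, modest as it is, lies in the completeness step, and reduces entirely to threading the \emph{moreover} clause of Lemma~\ref{lem:key} through the induction; no combinatorial work beyond that lemma and Theorem~\ref{thm:proper-k-color-line} is required.
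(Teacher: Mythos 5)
Your proposal is correct and follows essentially the same route as the paper: correctness of Algorithm~\ref{alg:proper-k-color-line} via the peeling argument of Lemma~\ref{lem:key} (its \emph{moreover} clause justifying each greedy clique choice) combined with the bounded exhaustive search justified by Theorem~\ref{thm:proper-k-color-line}, and the identical running-time accounting dominated by the clique search in each of the at most $k$ iterations. The only cosmetic slip is attributing the reduction from $k$ to $k-s$ colors in the residual graph to Fact~\ref{fact3} rather than to the first part of Lemma~\ref{lem:key}, which you invoke anyway.
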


\section{Proper $2$-color-line Graphs}\label{sec:proper-2-color-line}
We show in this section that proper $2$-color-line graphs admit a good characterization that allows us to recognize them in linear time. 
We first describe proper $2$-color-line graphs as follows. Recall that a co-bipartite graph is the complement of a bipartite graph.

\begin{proposition}\label{prop:proper-2-color-line}
The following statements are equivalent for any graph $G$.
\begin{itemize}
\item[\em (i)] $G$ is a proper $2$-color-line graph;
\item[\em (ii)] $G$ is a co-bipartite graph with a clique partition $V(G)=A\cup B$ such that each vertex in $A$ has at most two neighbors in $B$ and each vertex in $B$ has at most two neighbors in $A$; 
\item[\em (iii)] $G$ is a proper $2$-color-line graph of a bipartite graph (in which each component is a path or an even cycle).
\end{itemize} 
\end{proposition}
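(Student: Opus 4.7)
The plan is to establish the circular chain (iii) $\Rightarrow$ (i) $\Rightarrow$ (ii) $\Rightarrow$ (iii). The first implication is immediate from the definition of proper color-line graph, since a bipartite root graph is in particular a root graph. The second implication is also short; the work lies in (ii) $\Rightarrow$ (iii), and the argument will go through the clique-partition characterization from Theorem~\ref{thm:proper-k-color-line} (equivalently, Theorem~\ref{thm:proper-color-line2}).

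For (i) $\Rightarrow$ (ii), I would fix a properly edge $2$-colored graph $(H,\phi)$ with $G=\CL(H)$ and let $A$ and $B$ be the two color classes, viewed as subsets of $V(G)=E(H)$. Each color class is a clique of $G$ because same-color edges of $H$ are adjacent in $\CL(H)$, and $A\cup B$ is a partition of $V(G)$. For a vertex $v=xy\in A$, its neighbors in $B$ are exactly the color-$2$ edges of $H$ incident with $xy$; properness of $\phi$ forces at most one such edge at $x$ and at most one at $y$, so $v$ has at most two neighbors in $B$, and symmetrically for vertices of $B$.

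The main step is (ii) $\Rightarrow$ (iii). Given a clique partition $V(G)=A\cup B$ satisfying the cross-degree condition, let $G'=G\setminus(E(A)\cup E(B))$ be the bipartite subgraph consisting of the edges of $G$ between $A$ and $B$. By hypothesis $G'$ has maximum degree at most $2$, so each component is a path or a cycle, and bipartiteness rules out odd cycles. Using $P_n=L(P_{n+1})$ and $C_{2m}=L(C_{2m})$, I would take $H$ to be the disjoint union of a $P_{n_i+1}$ for each path component $P_{n_i}$ of $G'$ and a $C_{2m_j}$ for each even cycle component $C_{2m_j}$, so that $G'=L(H)$ and $H$ is bipartite with every component a path or an even cycle. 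Finally, exactly as in the sufficiency argument of Theorem~\ref{thm:proper-color-line2} applied to the clique partition $\{A,B\}$, assign color $1$ to the edges of $H$ corresponding to vertices of $A$ and color $2$ to those corresponding to vertices of $B$; since $A$ and $B$ are independent sets in $G'=L(H)$, both color classes are matchings in $H$, the coloring is proper, and $G=\CL(H)$.

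The potential obstacle in (ii) $\Rightarrow$ (iii) is arranging that the root graph is bipartite rather than an arbitrary graph whose line graph is $G'$; however, the maximum-degree-$2$ and bipartiteness of $G'$ collapse the root-graph ambiguity entirely, since each component of $G'$ is a path or an even cycle and each of these has a unique (bipartite) root. Once this is observed, everything else is bookkeeping using the previously proved characterization theorems.
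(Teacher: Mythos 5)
Your proposal is correct and follows essentially the same route as the paper: the same cyclic chain of implications, with (ii) $\Rightarrow$ (iii) handled by observing that the cross edges form a maximum-degree-two bipartite graph whose components are paths and even cycles, taking the natural bipartite root, and coloring the two matchings as in Theorem~\ref{thm:proper-color-line2}. The only cosmetic difference is that for (i) $\Rightarrow$ (ii) you count the at most one other-colored edge at each endpoint directly, whereas the paper invokes the fact that a bipartite graph is a line graph if and only if its maximum degree is at most two; both are fine.
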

\begin{proof} 
(i) $\Rightarrow$ (ii): This implication follows immediately from the proof of Theorem~\ref{thm:proper-color-line2}: Let $G=\CL(H)$ with respect to a proper $2$-edge coloring $\phi:E(H)\to\{1,2\}$, and let $A=\{e\in E(H)\mid \phi(e)=1\}, B=\{e\in E(H)\mid \phi(e)=2\}$. Then $V(G)=A\cup B$ is a clique partition of $G$ and the bipartite graph $G\setminus \left(E(A)\cup E(B)\right)$ is a line graph, hence (ii) by noticing that a bipartite graph is a line graph if and only if its maximum degree is at most two (cf. Theorem~\ref{thm:line1}). 
 
\medskip\noindent
(ii) $\Rightarrow$ (iii):  Let $G$ admit a clique partition $V(G)=A\cup B$ such that each vertex in $A$ has at most two neighbors in $B$ and each vertex in $B$ has at most two neighbors in $A$. Then each component of the bipartite graph $G'=G\setminus \left(E(A)\cup E(B)\right)$ is a path or an even cycle. Hence $G'=L(H')$ for some bipartite graph $H'$ in which each component is a path or an even cycle, and the edge set of $H'$ is partitioned into two matchings corresponding to vertices in $A$ and in $B$, respectively. Color the edges in one matching with color $1$ and the edges in the other matching with color $2$, and let $H$ be the obtained properly edge $2$-colored bipartite graph. Then, clearly, $G=\CL(H)$. 

\medskip\noindent
(iii) $\Rightarrow$ (i): This implication is obvious.
\end{proof}

\smallskip
Since a co-bipartite graph may have many clique bipartitions, Proposition~\ref{prop:proper-2-color-line}~(ii) does not immediately yield an efficient recognition for proper $2$-color-line graphs. Theorem~\ref{thm:proper-2-color-line} below gives a good characterization; see Figure~\ref{fig:forbidden-proper-2-color-line} for the graphs $F_1, F_2, F_3$ and $F_4$. 

\begin{observation}\label{obs:no-proper-2-color-line}
None of $K_5-e, F_1, F_2, F_3$ and $F_4$ is a proper $2$-color-line graph. 
\end{observation}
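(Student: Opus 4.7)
The plan is to use the clique-partition characterisation from Proposition~\ref{prop:proper-2-color-line}(ii): a graph is a proper $2$-color-line graph iff its vertex set can be partitioned into two cliques $A,B$ so that every vertex has at most two neighbors on the opposite side. So for each graph $G\in\{K_5-e,F_1,F_2,F_3,F_4\}$ I would enumerate every clique bipartition $V(G)=A\cup B$ and in each case exhibit a vertex with at least three neighbors in the opposite clique.

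To keep the enumeration finite and clean, I would first observe a useful structural reduction: every non-edge of $G$ forces its two endpoints into different parts of any clique bipartition. Hence a clique bipartition of $G$ is exactly a proper $2$-coloring of the complement $\overline G$, and it exists only if $\overline G$ is bipartite; moreover, the number of bipartitions is $2^{c(\overline G)}$ where $c(\overline G)$ is the number of connected components of $\overline G$. This turns the problem into a very short case check: for each $G$ one identifies the components of $\overline G$, writes down the (few) essentially distinct two-colorings (using any automorphisms of $G$ to cut the case list further), and in each case points to a vertex of degree $\ge 3$ into the other side.

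For the warm-up case $G=K_5-e$ with $v_1v_2$ the missing edge, $\overline G$ has a single non-trivial component $\{v_1,v_2\}$, so up to swapping $A,B$ we must take $v_1\in A$, $v_2\in B$; the three universal vertices $v_3,v_4,v_5$ may be distributed arbitrarily. A split of the form $3+0$ leaves a singleton side whose unique vertex has $4$ neighbors across, and any $2+1$ split places some universal vertex alone with a non-neighbor, giving it $3$ neighbors on the opposite side. For each $F_i$ I would repeat this scheme verbatim: compute $\overline{F_i}$, list its components, enumerate the $2^{c}$ colorings up to automorphism, and in each one point to the offending vertex.

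The only real obstacle is bookkeeping: making sure the automorphism group of each $F_i$ is used to collapse symmetric cases, and that the assertion “\,three neighbors on the opposite side\,” is verified for the right vertex in each branch. Since each $F_i$ is a small fixed graph (cf.\ Figure~\ref{fig:forbidden-proper-2-color-line}) and $\overline{F_i}$ has only a handful of components, the case analysis terminates in a few lines per graph, and no additional machinery beyond Proposition~\ref{prop:proper-2-color-line} is needed. \qed
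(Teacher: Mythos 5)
Your proposal follows the paper's own route: the paper likewise disposes of all five graphs by a case analysis against Proposition~\ref{prop:proper-2-color-line}(ii), and your complement-coloring bookkeeping is simply a clean way to organize that finite check (for each $F_i$ the complement restricted to the six outer vertices is connected, so the central vertex is forced to have three neighbors across in every clique bipartition). One trivial nit: in the $3+0$ split of $K_5-e$ the singleton vertex has $3$ neighbors across, not $4$, since its non-neighbor lies on the other side --- which of course still violates the bound.
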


\begin{proof} 
By case analysis, using Proposition~\ref{prop:proper-2-color-line} (ii). We note that every proper induced subgraph of $G\in\{K_5-e, F_1, F_2, F_3, F_4\}$ is a proper $2$-color-line graph. 
\end{proof}

\smallskip
We note that $K_5-e, F_1$, and $F_2$ are proper $3$-color-line graphs. 

\begin{theorem}\label{thm:proper-2-color-line}
$G$ is a proper $2$-color-line graph if and only if $G$ is a $(K_5-e, F_1, F_2, F_3, F_4)$-free co-bipartite graph.
\end{theorem}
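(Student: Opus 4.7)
I would split the argument into necessity and sufficiency. The necessity direction is essentially immediate: if $G$ is a proper $2$-color-line graph then by Proposition~\ref{prop:proper-2-color-line}(ii) it admits a partition $V(G)=A\cup B$ into two cliques, hence is co-bipartite. Moreover, the class of proper $2$-color-line graphs is hereditary -- deleting a vertex of $G=\CL(H)$ just deletes the corresponding edge of $H$, which keeps the coloring proper and introduces no new colors -- so Observation~\ref{obs:no-proper-2-color-line} forces $G$ to avoid each of $K_5-e, F_1, F_2, F_3, F_4$ as an induced subgraph.

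For sufficiency, assume $G$ is a co-bipartite graph avoiding the five listed subgraphs. The goal, per Proposition~\ref{prop:proper-2-color-line}(ii), is to find a clique bipartition $V(G)=A\cup B$ such that every vertex has at most two neighbors on the opposite side. Since $\overline{G}$ is bipartite, each of its connected components has an essentially unique bipartition, and the clique bipartitions of $G$ correspond to choosing an orientation of each such component. The basic driver of the argument is a local consequence of $K_5-e$-freeness: if $v$ lies on one side and has three neighbors $b_1,b_2,b_3$ in the opposite clique, then $\{v,b_1,b_2,b_3\}$ induces $K_4$, and any further non-neighbor $b_0$ of $v$ on that side would make $\{v,b_0,b_1,b_2,b_3\}$ an induced $K_5-e$. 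Thus each vertex of $G$ is either universal to the opposite clique (and hence can be moved across without breaking cliquehood) or has at most two neighbors in that clique.

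With this dichotomy, I would start from an arbitrary clique bipartition of $G$ and iteratively flip offending universal vertices, seeking a bipartition in which no violation remains. A standard exchange argument then either reaches a good bipartition, or exhibits a "blockage" in which every re-orientation leaves some vertex with at least three cross-neighbors; my plan is to show that each minimal blockage embeds one of $F_1, F_2, F_3, F_4$ as an induced subgraph of $G$, contradicting the hypothesis. Once the required bipartition is produced, Proposition~\ref{prop:proper-2-color-line}(ii) (or equivalently (iii)) converts it into a properly edge $2$-colored root graph $H$ with $G=\CL(H)$. The main obstacle is this last step -- verifying that the four graphs $F_1, F_2, F_3, F_4$ genuinely exhaust all blockages -- which requires a careful case analysis on the sizes and shapes of the components of $\overline{G}$ around violating vertices, tracking how universal vertices that are forced onto a side of size $\ge 3$ interact with their non-universal neighbors. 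I expect this enumeration to absorb the bulk of the work, with the rest following cleanly from the framework above.
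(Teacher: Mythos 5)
Your necessity argument and your opening observation for sufficiency (that $K_5-e$-freeness forces every vertex with three or more cross-neighbors to be universal to the opposite clique, hence universal in $G$ since its own side is already a clique) are both correct and match the paper's starting point. But the proposal has a genuine gap: the entire combinatorial core of the theorem --- showing that if \emph{every} clique bipartition leaves some vertex with three or more cross-neighbors, then $G$ contains one of $F_1,F_2,F_3,F_4$ --- is only announced as a plan (``a standard exchange argument'', ``show that each minimal blockage embeds one of $F_1,\dots,F_4$'') and never carried out. There is no off-the-shelf exchange argument that does this; indeed your flipping step is not obviously well-founded, since moving a vertex $v$ that is universal to $B$ over to $B$ turns all of $A\setminus\{v\}$ into cross-neighbors of $v$ and adds a new cross-neighbor to every vertex remaining in $A$, so the procedure can fail to make progress or cycle. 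Exhausting the blockages genuinely requires the finite case analysis that you defer, and that analysis is the proof.

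For comparison, the paper avoids local search entirely: it sets $U$ to be the set of universal vertices of $G$, fixes a clique bipartition $A\cup B$ of $G-U$ (so every vertex of $A$ has a non-neighbor in $B$ and vice versa), and derives $|N(v)\cap B|\le 2-|U|$ for $v\in A$ directly from $K_5-e$-freeness. If $U=\emptyset$ this bipartition already satisfies Proposition~\ref{prop:proper-2-color-line}(ii). If $U\ne\emptyset$, each vertex has at most one cross-neighbor; letting $r$ be the number of $A$--$B$ edges, $F_4$-freeness gives $r\le 2$, $K_5-e$-freeness bounds $|U|$, and $F_1$, $F_2$, $F_3$ handle the cases $r=0,1,2$ respectively, each time showing $U$ can be absorbed into one of the two sides. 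You would need to supply an argument of essentially this shape to close your gap; as written, the proposal restates the hard direction of the theorem rather than proving it.
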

\begin{figure}[H]
\begin{center}
\begin{tikzpicture}[scale=.5] 
\tikzstyle{vertex}=[draw,circle,inner sep=1.3pt] 
\node[vertex] (1) at (0,3) {};
\node[vertex] (2) at (2,2) {};
\node[vertex] (3) at (0,0) {};
\node[vertex] (4) at (5,3) {};
\node[vertex] (5) at (3,2) {};
\node[vertex] (6) at (5,0) {};
\node[vertex] (7) at (2.5,0.8) {};

\draw (1) -- (2) -- (3) -- (1); 
\draw (4) -- (5) -- (6) -- (4); 
\draw (7) -- (1); \draw (7) -- (2); \draw (7) -- (3); \draw (7) -- (4); \draw (7) -- (5); \draw (7) -- (6);

\draw(2.5,-1) node {$F_1$};
\end{tikzpicture}
\quad
\begin{tikzpicture}[scale=.5] 
\tikzstyle{vertex}=[draw,circle,inner sep=1.3pt] 
\node[vertex] (1) at (0,3) {};
\node[vertex] (2) at (2,2) {};
\node[vertex] (3) at (0,0) {};
\node[vertex] (4) at (5,3) {};
\node[vertex] (5) at (3,2) {};
\node[vertex] (6) at (5,0) {};
\node[vertex] (7) at (2.5,0.8) {};

\draw (1) -- (2) -- (3) -- (1); 
\draw (4) -- (5) -- (6) -- (4); 
\draw (3) -- (6); 
\draw (7) -- (1); \draw (7) -- (2); \draw (7) -- (3); \draw (7) -- (4); \draw (7) -- (5); \draw (7) -- (6);

\draw(2.5,-1) node {$F_2$};
\end{tikzpicture}
\quad
\begin{tikzpicture}[scale=.5] 
\tikzstyle{vertex}=[draw,circle,inner sep=1.3pt] 
\node[vertex] (1) at (0,3) {};
\node[vertex] (2) at (2,2) {};
\node[vertex] (3) at (0,0) {};
\node[vertex] (4) at (5,3) {};
\node[vertex] (5) at (3,2) {};
\node[vertex] (6) at (5,0) {};
\node[vertex] (7) at (2.5,0.8) {};

\draw (1) -- (2) -- (3) -- (1); 
\draw (4) -- (5) -- (6) -- (4); 
\draw (3) -- (6); \draw (2) -- (5);
\draw (7) -- (1); \draw (7) -- (2); \draw (7) -- (3); \draw (7) -- (4); \draw (7) -- (5); \draw (7) -- (6);

\draw(2.5,-1) node {$F_3$};
\end{tikzpicture}
\quad
\begin{tikzpicture}[scale=.5]
\tikzstyle{vertex}=[draw,circle,inner sep=1.3pt] 
\node[vertex] (1) at (0,3) {};
\node[vertex] (2) at (2,2) {};
\node[vertex] (3) at (0,0) {};
\node[vertex] (4) at (5,3) {};
\node[vertex] (5) at (3,2) {};
\node[vertex] (6) at (5,0) {};
\node[vertex] (7) at (2.5,0.8) {}; 

\draw (1) -- (2) -- (3) -- (1); 
\draw (4) -- (5) -- (6) -- (4); 
\draw (3) -- (6); \draw (2) -- (5); \draw (1) -- (4);
\draw (7) -- (1); \draw (7) -- (2); \draw (7) -- (3); \draw (7) -- (4); \draw (7) -- (5); \draw (7) -- (6); 

\draw(2.5,-1) node {$F_4$};
\end{tikzpicture}
\end{center}
\caption{Forbidden induced subgraphs for proper $2$-color-line graphs.}\label{fig:forbidden-proper-2-color-line}
\end{figure}
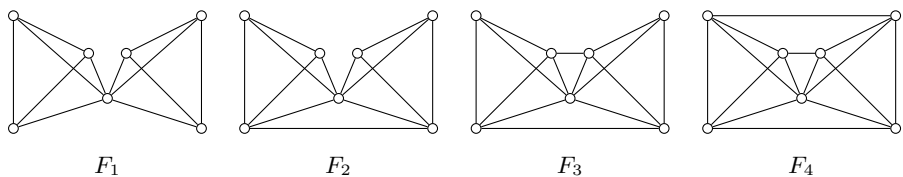
\begin{proof} 
\emph{Necessity}:  By Observation~\ref{obs:no-proper-2-color-line}, $G$ is $(K_5-e, F_1, F_2, F_3, F_4)$-free and by Proposition~\ref{prop:proper-2-color-line} (ii) $G$ is co-bipartite.

\medskip
\emph{Sufficiency}: Let $G$ be a $(K_5-e, F_1, F_2, F_3, F_4)$-free co-bipartite graph, and let $U$ be the set of all universal vertices of $G$. Let $A\cup B$ be a clique partition of the co-bipartite graph $G-U$. If $A=\emptyset$ or $B=\emptyset$, then $G$ is a complete graph, hence $G$ is a proper $2$-color-line graph ($G=\CL(M)$, where $M$ is a monochromatic matching). 

So, we may assume that $A$ and $B$ are not empty. 
Sine $U$ is the set of all universal vertices of $G$, we have 
\begin{equation*}
\text{for all $v\in A$, $B\setminus N(v)\not=\emptyset$ and, for all $v\in B$, $A\setminus N(v)\not=\emptyset$.}
\end{equation*}
Hence 
\begin{equation}\label{eq1}
\text{$|U|\le 2$.}
\end{equation} 
Otherwise, three vertices in $U$, a vertex $a\in A$ and a vertex in $B\setminus N(a)$ would induce a $K_5-e$. 
Moreover,
\begin{equation}\label{eq2}
\text{for all $v\in A$, $|N(v)\cap B|\le 2 - |U|$.}
\end{equation}
For, otherwise, $v$ and three vertices in $U\cup (N(v)\cap B)$ and a vertex in $B\setminus N(v)$ together would induce a $K_5-e$. By symmetry, 
\begin{equation}\label{eq3}
\text{for all $v\in B$, $|N(v)\cap A|\le 2 - |U|$.}
\end{equation} 
Thus, if $U=\emptyset$, then the clique bipartition $V(G)= A\cup B$ of $G$ satisfies Proposition~\ref{prop:proper-2-color-line} (ii), hence $G$ is a proper $2$-color-line graph, and we are done.

So, let us assume that $U\not=\emptyset$. Then
\begin{equation}\label{eq4}
\text{$|A|\le 2$ or $|B|\le 2$.}
\end{equation} 
Otherwise, by~(\ref{eq2}) and (\ref{eq3}) three vertices in $A$, three vertices in $B$ and a vertex in $U$ would induce an $F_i$ for some $i\in\{1,2,3,4\}$.

By (\ref{eq4}), let $|A|\le 2$, say. Then, by~(\ref{eq1}), (\ref{eq2}) and~(\ref{eq3}), the clique bipartition $V(G)=A\cup C$ of $G$ into cliques $A$ and $C=B\cup U$ satisfies the condition (ii) in Proposition~\ref{prop:proper-2-color-line}, hence $G$ is a proper $2$-color-line graph.  
\end{proof}

\smallskip
As co-bipartite graphs can be tested in linear time, Theorem~\ref{thm:proper-2-color-line} immediately yield an $O(n^7)$ time algorithm for recognizing if an $n$-vertex graph is a proper $2$-color-line graph, simply by checking all induced subgraphs on five and seven vertices. 
However, we can recognize proper $2$-color-line graphs in linear time by following the steps of the proof of Theorem~\ref{thm:proper-2-color-line}: 
\begin{enumerate}
 \item\label{step1} 
    Check if $G$ is co-bipartite; if not: stop with \lq \texttt{no}\rq, meaning $G$ is not a proper $2$-color-line graph.
 \item\label{step2} 
    Compute the set $U$ of universal vertices of $G$, and let $(A,B)$ be any clique bipartition of the co-bipartite graph $G-U$. 
 \item\label{step3}
    Let $G_1$ be the bipartite graph obtained from $G$ by deleting all edges in $A\cup U$ and in $B$, let $G_2$ be the bipartite graph obtained from $G$ by deleting all edges in $A$ and in $B\cup U$. (Note that in case $U=\emptyset$, $G_1=G_2$.)
 \item\label{step4}
    If neither $G_1$ nor $G_2$ is a line graph: stop with \lq \texttt{no}\rq. Otherwise stop with \lq \texttt{yes}\rq, meaning $G$ is a proper $2$-color-line graph. (If $G_i=L(H')$ for some $i=1,2$, then a properly edge $2$-colored graph $H$ with $G=\CL(H)$ can be obtained from $H'$ according to the proof of Proposition~\ref{prop:proper-2-color-line}, part (ii) $\Rightarrow$ (iii).)   
 
\end{enumerate}
Since each of steps~\ref{step1} -- \ref{step4} can be performed in linear time, the whole  procedure above has linear time complexity. The correctness follows directly from the proof of Theorem~\ref{thm:proper-2-color-line}. 
Thus, we conclude
\begin{theorem}\label{thm:reg-proper-2-color-line}
Given a graph $G$, it can be decided in linear time if $G$ is a proper $2$-color-line graph, and if so, a properly edge $2$-colored root graph $H$ for $G$ can be constructed in linear time.
\end{theorem}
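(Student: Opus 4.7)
The plan is to verify that the four-step procedure already listed above the theorem is both correct and executable in $O(n+m)$ time. I would treat Steps \ref{step1}--\ref{step3} as essentially routine and focus the real work on Step \ref{step4}. Step \ref{step1} reduces to a bipartiteness test on $\overline G$; in the branch where the algorithm does not reject, $G$ is co-bipartite and hence $m=\Omega(n^2)$, so enumerating the non-edges of $G$ costs $O(n^2)=O(n+m)$. Step \ref{step2} is a degree-sequence scan to extract $U$, plus BFS on $\overline{G-U}$ to obtain the bipartition $(A,B)$. Step \ref{step3} deletes $O(m)$ edges to form the bipartite graphs $G_1$ and $G_2$. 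Step \ref{step4} invokes Theorem~\ref{thm:line2}, which both recognizes line graphs and reconstructs a root in linear time; the $2$-coloring recipe from the (ii)$\Rightarrow$(iii) direction of Proposition~\ref{prop:proper-2-color-line} then colors the root in $O(n+m)$ additional time.

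For correctness, one direction is almost immediate. If Step \ref{step4} accepts because some $G_i = L(H')$, then $G_i$ is bipartite by construction, hence $\Delta(H')\le 2$, so each component of $H'$ is a path or an even cycle. The (ii)$\Rightarrow$(iii) construction of Proposition~\ref{prop:proper-2-color-line} then produces a properly $2$-edge-colored graph $H$ with $G=\CL(H)$. In the other direction, suppose $G$ is a proper $2$-color-line graph. By Proposition~\ref{prop:proper-2-color-line}(ii) it is co-bipartite, and by Observation~\ref{obs:no-proper-2-color-line} it avoids $K_5-e,F_1,F_2,F_3,F_4$, so the algorithm reaches Step \ref{step4} with a clique bipartition $(A,B)$ of $G-U$ in hand. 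I would then invoke the case analysis $r\in\{0,1,2\}$ from the sufficiency direction of Theorem~\ref{thm:proper-2-color-line}: in each case the forbidden-subgraph hypotheses force one of $(A\cup U,B)$ or $(A, B\cup U)$ to satisfy the degree bound in Proposition~\ref{prop:proper-2-color-line}(ii), so the corresponding $G_i$ is bipartite with $\Delta(G_i)\le 2$ and is therefore a disjoint union of paths and even cycles, in particular a line graph detected by Theorem~\ref{thm:line2}.

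The main delicate point I would pin down is that the case analysis in the proof of Theorem~\ref{thm:proper-2-color-line} really does apply to the arbitrary clique bipartition $(A,B)$ that Step \ref{step2} produces — the argument uses only the forbidden subgraphs of $G$, not any canonical choice of bipartition, so it transfers. A second small wrinkle is that the algorithm tries \emph{both} placements of the universal vertices by building two graphs $G_1, G_2$: the $F_i$-freeness arguments identify which of $|A|,|B|,|A'|,|B'|$ is small, and this determines which side $U$ must be glued to, but since the algorithm tests both it succeeds as soon as one works. Once these two observations are spelled out, the theorem follows by combining the per-step linear-time bounds with the two-directional correctness argument.
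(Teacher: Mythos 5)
Your proposal is correct and follows essentially the same route as the paper: the paper's own argument is exactly the four-step procedure, with correctness delegated to the case analysis in the proof of Theorem~\ref{thm:proper-2-color-line} and the linear-time line-graph recognition of Theorem~\ref{thm:line2}. The two points you single out --- that the case analysis applies to an \emph{arbitrary} clique bipartition of $G-U$, and that testing both $G_1$ and $G_2$ covers the two possible placements of $U$ --- are precisely the details the paper leaves implicit, and you resolve them correctly.
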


\section{Conclusion}\label{sec:open}

Color-line and proper color-line graphs generalize the classical notion of line graphs. We have given Krausz-type characterizations for color-line and proper color-line graphs. These do not lead to efficient recognition, and the main problem concerning (proper) color line graphs is: 

\begin{itemize}
  \item[(1)] Determine the computational complexity of recognizing (proper) color-line graphs.
\end{itemize} 

\noindent 
A first step in solving (1) is to specify the number $k$ of colors in the (proper) color-line root graph. We have shown that, for any fixed $k\ge 2$, recognizing $k$-color-line graphs is NP-complete, while recognizing proper $k$-color-line graphs is polynomially. 
 
\begin{itemize}
  \item[(2)] Determine the computational complexity of recognizing proper $k$-color-line graphs, when $k$ is part of the input. 
\end{itemize}   

\noindent 
We remark that Algorithm~\ref{alg:proper-k-color-line} is an XP-algorithm solving (2). It is even unknown if (2) can be solved in time $f(k)\cdot n^{O(1)}$ for some computable function $f$ depending only on $k$, i.e., if (2) admits an FPT-algorithm with respect to parameter $k$.

We have found a forbidden induced graph characterization for proper $2$-color-line graphs, which allows us to recognize proper $2$-color-line graphs in linear time.

\begin{itemize}

  \item[(3)] Find a forbidden induced subgraph characterization for proper $k$-color-line graphs, $k\ge 3$.
\end{itemize}   

\noindent
Note that any line graph with $n$ vertices is a proper $n$-color-line graph. This leads to the following problem. 
\begin{itemize}
 \item[(4)] Given a line graph $G$, determine the smallest integer $k$ such that $G$ is a proper $k$-color-line graph.
\end{itemize}

\noindent
A related question is: What is the smallest integer $k$ such that a bridgeless cubic graph on $n$ vertices is a proper $k$-color-line graph? Recall that $3\le k\le n/2$ (cf. Corollary~\ref{cor:cubic}).   

\medskip
\noindent
\textbf{Acknowledgements.}\, 
Research of the second author was supported in part by NSF grant DMS-1600483. 
Thanks are also due to the unknown referees for their comments that greatly improved the presentation. 

\bibliographystyle{amsplain}

\begin{thebibliography}{99}

\bibitem{Beineke}
Lowell W. Beineke, 
Derived graphs and digraphs. In: Beitr\"age zur Graphentheorie, H.~Sachs, H.-J.~Vo\ss,
H.~Walther, Eds., Teubner, (1968) 17--33.

\bibitem{CaiCP}
Leizhen Cai, Derek Corneil, Andrzej Proskurowski, 
A generalization of line graphs: $(X,Y)$-intersection graphs, 
{\em J. Graph Theory\/} 21 (1996) 267--287.

\bibitem{CRST}
Maria Chudnovsky, Neil Robertson, Paul Seymour, Robin Thomas, 
The strong perfect graph theorem, 
{\em Ann. of Math.\/} 164 (2006) 51--229.

\bibitem{HeBe}
Robert L. Hemminger, Lowell W. Beineke, 
Line graphs and line digraphs, in: L.W. Beineke
and R.J. Wilson, eds., {\em Selected Topics in Graph Theory\/}, Academic Press,  (1978) 271--305.

\bibitem{KL}
  Mikio Kano, Xueliang Li, 
  Monochromatic and heterochromatic subgraphs in edge-colored graphs -- A survey,
  \emph{Graphs Combin.\/} 24 (2008) 237--263.
  
\bibitem{Krau}
J. Krausz, 
D\'emonstration nouvelle dune th\'eor\`eme de Whitney sur les r\'eseaux, 
\emph{Mat. Fit Lapok\/} 50 (1943) 75--85.

\bibitem{LP}
Van Bang Le, Florian Pfender, 
Complexity results for rainbow matchings,  
\emph{Theoret. Comput. Sci.\/} 524 (2014) 27--33.

\bibitem{Pet} Julius Petersen, 
  Die Theorie der regul\"aren Graphen,
  {\em Acta Math.\/} 15 (1891) 193--220. 

\bibitem{Lehot}
Philippe G.H. Lehot, An optimal algorithm to detect a line graph and output its root graph, 
\emph{J. ACM\/} 21 (1974) 569--575.

\bibitem{Prisner}
Erich Prisner, 
Line graphs and generalizations -- a survey, in: {\em Surveys in Graph Theory\/}
(G. Chartrand, M. Jacobson eds.),
\emph{Congr. Numer.\/} 116 (1996) 193--230.

\bibitem{Prisner2003}
Erich Prisner, 
The recognition problem for line bigraphs, 
\emph{Discrete Math.\/} 268 (2003) 243--256.

\bibitem{Rou}
Nicholas D. Roussopoulos, 
A $\max\{m,n\}$ algorithm for determining the graph $H$ from its line graph $G$, 
\emph{Inform. Process. Lett.\/} 2 (1973) 108--112.

\bibitem{RooWil}
Arnoud Caspar Maria van Rooij, Herbert S. Wilf, The interchange graphs of a finite graph, 
\emph{Acta Math. Acad. Sci. Hungar.\/} 16 (1965) 263--269.

\bibitem{Whitney}
Hassler Whitney, Congruent graphs and the connectivity of graphs, 
\emph{Amer. J. Math.\/} 54 (1932) 150--168.

\end{thebibliography}

\end{document}